\newtheorem{theorem}{Theorem}[section]
\newtheorem{lemma}[theorem]{Lemma}
\theoremstyle{definition}
\newtheorem{definition}[theorem]{Definition}
\newtheorem{proposition}[theorem]{Proposition}
\newtheorem{corollary}[theorem]{Corollary}
\theoremstyle{remark}
\newtheorem{remark}[theorem]{Remark}
\numberwithin{equation}{section}
\begin{document}

% \title[short text for running head]{full title}
\title{Representation stability for the pure cactus group}
%\title{Cohomology rings with an FI-algebra structure generated by the first cohomology } 
%    Only \author and \address are required; other information is
%    optional.  Remove any unused author tags.

%    author one information
% \author[short version for running head]{name for top of paper}
\author[R. Jim\'enez Rolland]{Rita Jim\'enez Rolland} 
\author[J. Maya Duque]{Joaqu\' in Maya Duque*\footnote{* The second author is grateful for the support of the CONACYT grant 168093-F.}}

\address{ Centro de Ciencias Matem\'aticas, Universidad Nacional Aut\'onoma de M\'exico,  Morelia, Michoac\'an, M\'exico C.P. 58089.}%\\ \email{rita@matmor.unam.mx}}
\curraddr{}
\email{rita@matmor.unam.mx}

\address{CINVESTAV, Colonia Pedro Zacatenco, M\'exico, D.F. C.P. 07360.}
\curraddr{}
\email{jmaya@math.cinvestav.mx}
%\thanks{The authors would like to thank B. Farb and J. Mostovoy for useful comments.}

%    author two information
%\author{}
%\address{}
%\curraddr{}
%\email{}
%\thanks{}

\subjclass[2000]{Primary }
%    The 2010 edition of the Mathematics Subject Classification is
%    now available.  If you are citing a classification from the
%    new scheme, use the following input coding instead.
%\subjclass[2010]{Primary }

\date{\today}

\maketitle

%PREIMPRESI\'ON 2015

\begin{abstract} 
The fundamental group of the real locus of the Deligne-Mumford compactification of the moduli space of rational curves with $n$ marked points, the pure cactus group, resembles the pure braid group in many ways. 
As it is the case for several ``pure braid like'' groups, it is known that its cohomology ring is generated by its first cohomology. In this note we survey what the $FI$-module theory developed by Church, Ellenberg and Farb can tell us about those examples.  As a consequence we obtain uniform representation stability for the sequence of cohomology groups of the pure cactus group.
%Church and Farb show that the sequence of the cohomologies of the pure braid group, $\{H^i(P_n)\}_n$  is representation stable for $n\geq 4i$. Here we show that this is also true for the pure cactus group with $n\geq 6i+1$.
\end{abstract}

%\section*{Introduction}
\section{Introduction}
In this  paper we survey the principal notions and consequences of the $FI$-module theory introduced by Church, Ellenberg and Farb in \cite{CEF}. Our main objective is to show how the theory can be readily applied to certain sequences of groups and spaces with cohomology rings that have the structure of a graded $FI$-algebra and are known to be generated by the first cohomology group.  We revisit the collection of examples in Section \ref{OTROS} that have in common  this behavior (see Table \ref{EXA}).   

%In particular, we show that the cohomology ring of the pure cactus groups has this structure and we obtain representation stability for this example.

To illustrate our discussion we work out the details for the sequence of spaces $\{M_n\}$. The space $M_n:=\overline{\mathcal{M}}_{0,n}(\mathbb{R})$ is the real locus of the Deligne-Mumford compactification $\overline{\mathcal{M}}_{0,n}$ of the moduli space of rational curves with $n$ marked points. We recall the precise definition of $M_n$ in Section \ref{REALMODULI} below. 

%PURE BRAID LIKE EXAMPLES...
%ANALOGY WITH $\mathcal{F}(\mathbb{C},n-1)$ I THINK.... READ MORE ABOUT THIS 

As pointed out by Etingof--Henriques--Kamnitzer--Rains in \cite{EHKR}, the space $M_n$ resembles in many ways the configuration space of $n-1$ distinct points in the plane $\mathcal{F}(\mathbb{C},n-1)$. For instance, the spaces $M_n$ are smooth manifolds and Eilenberg-Mac Lane spaces (\cite{DJS}). In particular, it follows that the cohomology of the {\it pure cactus groups} $\pi_1(M_n)$ coincides with the cohomology of $M_n$. Another similarity, there exist maps from $M_n$ to $M_{n-1}$ which can be described as ``forgetting one marked point" and from  $M_n$ to $M_{n+1}$ which can be thought of ``inserting a bubble at a marked point". These maps endow the collection of spaces $\{ M_n \}$ with the structure of a simplicial space. Furthermore, as we recall in  ~\autoref{thm:cohomology ring}, the rational cohomology of $M_n$ is generated multiplicatively by the $1$-dimensional classes coming from the simplest of such spaces, namely, $M_4=S^1$. 
%OPERAD STRUCTURE VS SIMPLICIAL??

On the other hand, there are also notable differences between the spaces $M_n$ and $\mathcal{F}(\mathbb{C},n-1)$. For instance, for $n>5$ the space $M_n$ is not formal \cite{EHKR}. In contrast to the configuration spaces, the forgetful maps $M_n\rightarrow M_{n-1}$ are not fibrations. It is not known whether the pure cactus group $\pi_1 (M_n)$ is residually nilpotent.%REFERENCES OF THESE FACTS... 

In the present note we show that the spaces $M_n$ share one more property with the configuration spaces and the examples considered in Section \ref{OTROS}. Namely, we see in Section \ref{FIREP} that  their rational cohomology ring has the structure of a graded $FI$-algebra which is a finitely generated $FI$-module in each degree. As main consequence of this approach we obtain, as in the case of configuration spaces, uniform representation stability in the sense of \cite{CF}.

\begin{theorem}\label{thm:main}
The sequence $\{H^i(M_{n},\mathbb{Q})\}_n$ of $S_{n}$-representations is uniformly representation stable for all $i\geq0$ and the stability holds for $n\geq 6i.$
\end{theorem}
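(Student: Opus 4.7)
The plan is to identify $\{H^i(M_n,\mathbb{Q})\}_n$ as a finitely generated $FI$-module with controlled weight and generation degree, and then invoke the $FI$-module/representation stability dictionary of Church--Ellenberg--Farb \cite{CEF}.

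First, as outlined in Section~\ref{FIREP}, the forgetful maps of $\{M_n\}$---contracting the bubbles attached to marked points outside the image of a given injection $[m]\hookrightarrow[n]$---give the collection a co-$FI$-space structure, and the induced structure on rational cohomology is that of a graded $FI$-algebra. I treat each graded piece separately.

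Next I analyze $H^1(M_n,\mathbb{Q})$. By~\autoref{thm:cohomology ring}, every class in degree one is pulled back from $H^1(M_4)=H^1(S^1)=\mathbb{Q}$ along some choice of four marked points. This exhibits $H^1$ as a quotient of an $FI$-module of the form $M(W)$ for an $S_4$-representation $W$, so $H^1$ is finitely generated, and a direct analysis of the generators (together with the action of $S_4$ on $H^1(M_4)$) produces bounds on its weight $w_1$ and generation degree $d_1$ with $w_1+d_1\leq 6$. Since $H^*(M_n,\mathbb{Q})$ is generated as an $FI$-algebra in degree one, the multiplication map yields a surjection of $FI$-modules $(H^1)^{\otimes i}\twoheadrightarrow H^i(M_n,\mathbb{Q})$. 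Tensor products of finitely generated $FI$-modules are finitely generated, with weights and generation degrees at most the sum of those of the factors (\cite{CEF}, Prop.~2.3.6); hence $H^i$ is a finitely generated $FI$-module of weight $\leq iw_1$ and generation degree $\leq id_1$.

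Finally, the main representation stability theorem of \cite{CEF} translates these $FI$-module invariants into uniform representation stability: over $\mathbb{Q}$, any finitely generated $FI$-module of weight $\leq w$ and generation degree $\leq g$ is uniformly representation stable for $n\geq w+g$. Applied to $H^i(M_n,\mathbb{Q})$ this gives stability for $n\geq i(w_1+d_1)\leq 6i$, as claimed. The principal obstacle lies in the second step: achieving exactly the bound $n\geq 6i$ (rather than some weaker linear bound) requires extracting sharp constants for $w_1$ and $d_1$ from the explicit Etingof--Henriques--Kamnitzer--Rains presentation of $H^*(M_n,\mathbb{Q})$, rather than from the crude estimate that the degree-one generators are supported on four-element subsets of $[n]$.
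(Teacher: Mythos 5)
There is a genuine gap in your final step. The stability-range statement you invoke --- ``over $\mathbb{Q}$, any finitely generated $FI$-module of weight $\leq w$ and generation degree $\leq g$ is uniformly representation stable for $n\geq w+g$'' --- is not the theorem of Church--Ellenberg--Farb. Their result (Theorem \ref{thm:FI} in this paper) gives the stable range $n\geq s+d$ where $d$ is the \emph{weight} and $s$ is the \emph{stability degree}, and the stability degree is not bounded by the generation degree in general. For example, if $W\subset M(1)$ is the truncation submodule with $W_n=0$ for $n<N$ and $W_n=M(1)_n$ for $n\geq N$, the quotient $V=M(1)/W$ is generated in degree $\leq 1$ and has weight $1$, yet the maps $V_n\to V_{n+1}$ fail injectivity at $n=N-1$, so stability cannot begin before $n=N$; the obstruction is recorded precisely by the stability degree (equivalently, by where the relations live), which your argument never controls. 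Moreover, even granting your principle, the constants do not come out: for $H^1(M_\bullet,\mathbb{Q})$ the weight is exactly $3$ (it is $V(1,1,1)=\bigwedge^3\mathcal{H}_n$ for $n\geq 4$) and the generation degree is exactly $4$ (since $H^1(M_3)=0$ and the generators $w_{ijkl}$ need four indices), so $w_1+d_1=7$, not $\leq 6$, and your scheme would only yield $n\geq 7i$.

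What the paper actually does to reach $6i$ is to bound the two invariants that genuinely enter Theorem \ref{thm:FI}: the weight of $H^i$ is $\leq 3i$ (Lemma \ref{lemmaweight}, using weight$(H^1)=3$ and the tensor-product bound for weights), and the stability degree of $H^i$ is $\leq 3i$ (Lemma \ref{lemmastab}). The latter requires an explicit computation of the $S_n$-coinvariants of $H^1(M_{|\bar{\mathrm{\bf a}}\sqcup\mathbf{n}|},\mathbb{Q})$, showing stab-deg$(H^1)=1$, and then the Kupers--Miller tensor-product estimate stab-deg$(V\otimes W)\leq\max\{s_1+d_1,s_2+d_2,d_1+d_2\}$ (Corollary \ref{STABDEGBYH1}) to propagate this to $H^i$. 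Your first two steps (co-$FI$-space structure, finite generation of $H^1$ as a quotient of an $M(W)$, surjection $(H^1)^{\otimes i}\twoheadrightarrow H^i$, and the resulting weight bound $\leq 3i$) are sound and match the paper; to close the argument you need to replace ``generation degree'' by ``stability degree'' and supply the coinvariants computation (or an equivalent bound on the relation degree) for $H^1$.
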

 
Moreover,  we show that the characters of the $S_n$-representations are strongly constrained.

\begin{theorem}\label{thm:CHARCACTUS}
For $i\geq 0$ and $n\geq 6i$ the character of the $S_n$-representation $H^i(M_n,\mathbb{Q})$ is given by a unique character polynomial $P_i$ of degree $\leq 3i$. 
\end{theorem}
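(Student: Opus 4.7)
The plan is to derive \autoref{thm:CHARCACTUS} from the character polynomial theorem of Church--Ellenberg--Farb \cite{CEF}. That theorem asserts, in essence, that a finitely generated $FI$-module $V$ over $\mathbb{Q}$ of weight $\leq d$ has a character $\chi_{V_n}$ which, for $n$ in the representation-stable range, agrees with a unique character polynomial of degree $\leq d$ in the cycle-counting class functions $X_r$. Granting this, the task reduces to two verifications: (a) a weight bound of $3i$ on the $FI$-module $\{H^i(M_n,\mathbb{Q})\}_n$, and (b) confirmation that the threshold $n \geq 6i$ from \autoref{thm:main} lies inside the character-polynomial stability range furnished by $FI$-module theory.

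The main step is the weight bound. By \autoref{thm:cohomology ring}, the graded $FI$-algebra $H^*(M_n,\mathbb{Q})$ is generated in cohomological degree $1$, so in each fixed degree $i$ the $FI$-module $\{H^i(M_n,\mathbb{Q})\}_n$ is a quotient of the $i$-fold tensor product of $\{H^1(M_n,\mathbb{Q})\}_n$ with itself. Since weight is subadditive under tensor products and non-increasing under quotients, the whole problem collapses to checking that $\{H^1(M_n,\mathbb{Q})\}_n$ has weight $\leq 3$. I would verify this by taking the explicit generators of $H^1(M_n,\mathbb{Q})$ from the EHKR presentation of the cohomology ring, grouping them into $S_n$-orbits, and decomposing the resulting permutation representations into irreducibles $V(\lambda)_n$; the bound amounts to showing that every partition $\lambda$ appearing satisfies $|\lambda| \leq 3$ in the stable-indexing convention of \cite{CEF}. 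This concrete representation-theoretic calculation is the principal obstacle.

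With the weight bound in hand, the conclusion is then immediate: the CEF character polynomial theorem, applied in the stability range $n \geq 6i$ provided by \autoref{thm:main}, yields a polynomial $P_i$ of degree $\leq 3i$ whose value on a permutation of given cycle type equals $\chi_{H^i(M_n,\mathbb{Q})}$ for all $n \geq 6i$. Uniqueness of $P_i$ follows from the algebraic independence of the class functions $X_1, X_2, \ldots$ as $n \to \infty$: any polynomial in the $X_r$ that agrees with an infinite sequence of characters of $S_n$ is determined uniquely. Thus, modulo the explicit weight computation for $H^1$, everything else consists in assembling pieces of the $FI$-module machinery already laid out in \cite{CEF} and in the preceding sections of the paper.
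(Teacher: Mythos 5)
Your overall strategy is the same as the paper's (apply the Church--Ellenberg--Farb character polynomial theorem, i.e.\ Theorem \ref{thm:CHAR}, to the finitely generated $FI$-module $H^i(M_\bullet,\mathbb{Q})$ with weight bound $3i$), but there is a concrete gap in how you obtain the range $n\geq 6i$. The range in Theorem \ref{thm:CHAR} is $n\geq s+d$, where $d$ is the weight and $s$ is the \emph{stability degree}, and you never bound $s$. Appealing to Theorem \ref{thm:main} does not close this: in the paper the $6i$ in Theorem \ref{thm:main} is itself extracted from the two bounds weight$\leq 3i$ (Lemma \ref{lemmaweight}) and stab-deg$\leq 3i$ (Lemma \ref{lemmastab}), the latter proved by the explicit coinvariants computation showing stab-deg$(H^1)=1$ combined with the tensor-product bound of Corollary \ref{STABDEGBYH1}; so using Theorem \ref{thm:main} here is essentially circular. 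Moreover, uniform representation stability for $n\geq 6i$ (constancy of multiplicities $c_{\lambda,n}$) does not by itself give character polynomiality starting at $6i$: one would additionally need the standard fact that $\chi_{V(\lambda)_n}$ is given by a single character polynomial of degree $|\lambda|$ for all $n\geq |\lambda|+\lambda_1$, and then observe that the weight bound forces $|\lambda|+\lambda_1\leq 2|\lambda|\leq 6i$ for every $\lambda$ that occurs. You make neither that argument nor the stability-degree estimate, so as written the degree bound $\leq 3i$ is justified but the threshold $n\geq 6i$ is not.

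A secondary caveat concerns your plan for the weight bound on $H^1$. Decomposing the $S_n$-span of the generators $w_{ijkl}$ \emph{before} imposing the five-term relation gives the space of antisymmetric symbols, i.e.\ $\bigwedge^4\mathbb{Q}^n\cong V(1,1,1,1)_n\oplus V(1,1,1)_n$, which has weight $4$, not $3$ (this is consistent with Proposition \ref{DEGBOUNDSW}, which from generation in degree $\leq 4$ also only yields weight $\leq 4$). The five-term relation is what kills the $V(1,1,1,1)_n$ piece: the efficient route, used in the paper following EHKR, is the explicit isomorphism $H^1(M_n,\mathbb{Q})\cong \bigwedge^3\mathcal{H}_n=V(1,1,1)_n$ given by $w_{ijkl}\mapsto (e_i-e_l)\wedge(e_j-e_l)\wedge(e_k-e_l)$. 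With only weight $\leq 4$ for $H^1$ you would get degree $\leq 4i$ and a correspondingly worse stable range, so this identification (or an equivalent use of the relations) must be part of the proof rather than deferred as an unexamined ``principal obstacle.''
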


A {\it character polynomial} is a polynomial in the cycle-counting functions $X_l$, where  $X_l(\sigma)$ counts the  number of $l$-cycles in the cycle decomposition of $\sigma\in\bigsqcup_n S_n$.   For instance, for $n\geq 4$ it is known that  $H^1(M_n,\mathbb{Q})=\bigwedge^3 \mathcal{H}_n$, where  $\mathcal{H}_n$ is the standard representation of $S_n$ of dimension $n-1$. Its character is given by the character polynomial of degree $3$: 
$$\chi_{H^1(M_n,\mathbb{Q})}=\binom{X_1}{3}+X_3-X_2X_1-\binom{X_1}{2}+X_2+X_1-1.$$
Notice that for $n\geq 4$, the value of the character in any permutation $\sigma\in S_n$ only depends on cycles of length $1$, $2$ and $3$ in the cycle decomposition of $\sigma$.  More generally, Theorem \ref{thm:CHARCACTUS}  implies that  the character of the $S_n$-representation $H^i(M_n,\mathbb{Q})$ thought of as a function of  $\sigma\in S_n$  is completely independent to the number of $r$-cycles for all $r > 3i$.

Furthermore, we recover the following result first proven in \cite[Theorem 6.4]{EHKR}.

\begin{corollary}\label{BETTICACTUS}
For every $i\geq 1$, there exists a unique polynomial $p_i\in\mathbb{Q}[t]$ of degree $\leq 3i$ such that for $n\geq 6i$, the $i$th Betti number of $M_n$  is given by $b_i(M_n)=p_i(n)$.
\end{corollary}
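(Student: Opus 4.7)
The plan is to deduce the corollary directly from Theorem~\ref{thm:CHARCACTUS} by evaluating the character at the identity. Recall that for any finite-dimensional $S_n$-representation $V$, one has $\dim_{\mathbb{Q}} V = \chi_V(e)$, where $e \in S_n$ is the identity element. In particular, $b_i(M_n) = \dim_{\mathbb{Q}} H^i(M_n,\mathbb{Q}) = \chi_{H^i(M_n,\mathbb{Q})}(e)$, so it suffices to understand what the character polynomial $P_i$ produced by Theorem~\ref{thm:CHARCACTUS} contributes at $\sigma = e$.

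The key observation is that the identity permutation in $S_n$ has cycle-type $(1^n)$, meaning $X_1(e) = n$ and $X_l(e) = 0$ for every $l \geq 2$. Thus, writing $P_i \in \mathbb{Q}[X_1, X_2, \ldots]$ and setting all variables other than $X_1$ to zero, I would define
\[
p_i(t) := P_i(t, 0, 0, \ldots, 0) \in \mathbb{Q}[t].
\]
For all $n \geq 6i$ Theorem~\ref{thm:CHARCACTUS} gives $b_i(M_n) = P_i(e) = p_i(n)$. Since the convention for character polynomials assigns weighted degree $l$ to the variable $X_l$ and $P_i$ has (weighted) degree at most $3i$, the monomials contributing to $p_i$ are powers $X_1^k$ with $k \leq 3i$, so $p_i$ is a polynomial in $t$ of degree at most $3i$.

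Uniqueness of $p_i$ is immediate: two polynomials of degree at most $3i$ that agree on the infinite set $\{n \in \mathbb{Z} : n \geq 6i\}$ must coincide as elements of $\mathbb{Q}[t]$. I do not anticipate a genuine obstacle here, since the corollary is essentially a specialization of the character polynomial established in Theorem~\ref{thm:CHARCACTUS}; the only subtlety worth flagging is recording the correct convention for the (weighted) degree of a character polynomial, which ensures that the degree bound $\leq 3i$ on $P_i$ transfers to the bound $\deg p_i \leq 3i$.
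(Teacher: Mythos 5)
Your proposal is correct and matches the paper's argument: the paper deduces the corollary from Theorem~\ref{thm:CHARCACTUS} via the same observation that $b_i(M_n)=\chi_{H^i(M_n,\mathbb{Q})}(\mathrm{id})=P_i(n,0,\ldots,0)$, with the weighted-degree convention giving $\deg p_i\leq 3i$ and the range $n\geq 6i$ coming from the stable range of the character polynomial. Your explicit uniqueness remark (agreement on infinitely many integers) is a small but welcome addition that the paper leaves implicit.
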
 

In particular,  for $n\geq 4$ the first Betti number $b_1(M_n)$  is the cubic polynomial in $n$:
  
$$\dim_\mathbb{Q}\big(H^1(M_n,\mathbb{Q})\big)=\chi_{H^1(M_n,\mathbb{Q})}(\text{id})= \binom{n}{3}-\binom{n}{2}+n-1=\frac{(n-1)(n-2)(n-3)}{6}.$$

More is actually known about these representations. In \cite[Theorem 1.1]{RAINS} Rains obtained an explicit formula for the graded character of the $S_n$-action on $H^*(M_n,\mathbb{Q})$. In particular, his result  recovers the product formula \cite[Theorem 6.4]{EHKR} for the Poincar\'e series of $M_n$. 

%ADD MORE CONTEXT TO THE IMPLICATIONS.... QUOTE OTHER EXAMPLES.... TALK MORE ABOUT LAST SECTION

%\cite{FARB} OVERVIEW OF THE THEORY AND CONSEQUENCES IN A MORE GENERAL SETTING

%INTRO TOO SIMILAR TO THE BEGINNING OF SECTION 3
%The notion of  They proved that the cohomology of the pure braid groups and the Torelli groups are representation stable. Subsequently, representation stability was established for the cohomology of the moduli spaces $\mathcal{M}_{g,n}$ of Riemann surfaces of genus $g$ with $n$ marked points (Jimenez Rolland, \cite{JIM}) and for the pure virtual braid groups and pure flat braids (Lee, \cite{LEE2}). Later, Church, Ellenberg and Farb \cite{CEF} showed that the appropriate language to treat the phenomenon of representation stability is that of FI-modules. Namely, in terms of FI-modules, representation stability is, basically, equivalent to finite generation. In the present note we prove the uniform representation stability for the rational cohomology of the pure cactus groups by combining the methods of \cite{CEF} with the description of $H^{*}(M_n, \mathbb{Q})$ obtained by Etingof, Henriques, Kamnitzer and Rains in \cite{EHKR}.

%The proof will be given in section 3.\\

\subsection*{Acknowledgments.}
The authors would like to thank B. Farb, J. Mostovoy, B. Cisneros and J. Wilson for useful conversations and comments. We are grateful to the referee for all the suggestions that  helped  improve the exposition in this paper.
%THANKS TO  CONACYT.... ??

\section{The space $M_n$ and its cohomology ring }\label{REALMODULI}

%LEER DEVADOSS PARA QUIZ\'A RESTRUCTURAR ESTA SECCI\'ON...

The Deligne-Mumford compactification $\overline{\mathcal{M}}_{0,n}$ of the moduli space $\mathcal{M}_{0,n}$ of rational curves with $n$ marked points is a smooth projective variety over $\mathbb{Q}$ that parametrizes stable curves of genus $0$ with $n$ labeled points (see \cite{DM}). %We recall the definitions from \cite{DM}.

\begin{definition} A \textit{stable rational curve} with $n$ marked points is a finite union $C$ of projective lines $C_1,C_2,\ldots, C_k$ together with distinct marked points $z_1,\dots,z_n$ in $C$ such that
\begin{itemize}
\item each marked point $z_i$ is in only one $C_j$;
\item $C_i\cap C_j$ is either empty or consists only of one point, and in this case the intersection is transversal;
\item its associated graph, with a vertex for each $C_i$ and an edge for each pair of interesection lines,  is a tree;
\item each component $C_i$ has at least three special points, where a special point is either an intersection point
with another component, or a marked point.
\end{itemize}
\end{definition}

In $\overline{\mathcal{M}}_{0,n}$ we say that  two stable curves $C=\{C_1,\ldots,C_k,(z_1,\ldots, z_n)\}$ and $C'=\{C'_1,\ldots,C'_k,(z'_1,\ldots, z_n')\}$ are equivalent if there is an isomorphism of algebraic curves $f:C\rightarrow C'$ such that $f(z_i)=z'_i$. Notice  $f$ is given by a collection of $k$ fractional linear maps $f_j:C_j\rightarrow C'_{\sigma(j)}$, where $\sigma\in S_k$. In particular, if $f$ is an automorphism of $C$, each $f_i$ is an automorphism of $\mathbb{P}^1$ that fixes  at least three points (the special points of the component $C_i$).  Hence a stable curve $C$ has no non-trivial automorphisms.

Let $n,m\geq 3$ and consider an injection  $f:[m]\hookrightarrow [n]$, of the set $[m]:=\{1,\ldots,m\}$ into  $[n]:=\{1,\ldots,n\}$. There is a well-defined ``forgetful morphism"   $$\phi_f:\overline{\mathcal{M}}_{0,n}\rightarrow \overline{\mathcal{M}}_{0,m}$$ given by $\phi_f (C)=\phi_f\big(\{C_1,\ldots,C_k,(z_1,\ldots, z_n)\}\big):=\{C'_1,\ldots,C'_r,(z_{f(1)},\ldots, z_{f(m)})\}$ where the components $C'_i$ of $\phi_f (C)$ are the same as the components $C_i$ of $C$ unless they become unstable (see figure below). Notice that $\phi_f$  forgets all the marked points whose index are not in the image $f([m])$. If after this ``forgetting process'' the component $C_i$ has less than three special points, then we contract the component $C_i$ so that $\phi_f(C)$ is still a stable curve (for more details see for example \cite[Chapter 1]{QUANTUM} and references therein). Furthermore, by taking $f\in$End$([n])$, we obtain a natural action of the symmetric group $S_n$ on $\overline{\mathcal{M}}_{0,n}$ by permuting the marked points.

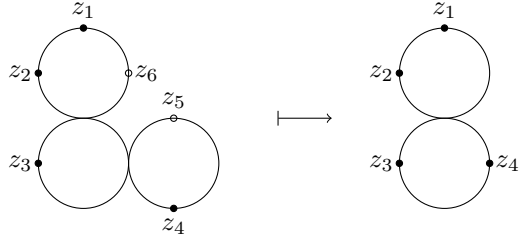
\begin{figure}[H]

\caption{ Forgetful morphism for the canonical inclusion  $[4]\hookrightarrow [6].$} 

$$
\begin{tikzpicture}[scale = 0.6]

\draw (0,0) circle (1cm);
\draw (0,2) circle (1cm);
\draw (2,0) circle (1cm);
\draw (8,0) circle (1cm);
\draw (8,2) circle (1cm);

\draw (-1.4,0) node  {$z_3$};
\filldraw [black] (-1,0) circle (2pt);

\draw (-1.4,2) node  {$z_2$};
\filldraw [black] (-1,2) circle (2pt);

\draw (0,3.4) node  {$z_1$};
\filldraw [black] (0,3) circle (2pt);

\draw (1.4,2) node  {$z_6$};
\draw [black] (1,2) circle (2pt);

\draw (2,1.4) node  {$z_5$};
\draw [black] (2,1) circle (2pt);

\draw (2,-1.4) node  {$z_4$};
\filldraw [black] (2,-1) circle (2pt);

\draw (8,3.4) node (1) {$z_1$};
\filldraw [black] (8,3) circle (2pt);

\draw (6.6,2) node (1) {$z_2$};
\filldraw [black] (7,2) circle (2pt);

\draw (6.6,0) node (1) {$z_3$};
\filldraw [black] (7,0) circle (2pt);

\draw (9.4,0) node (1) {$z_4$};
\filldraw [black] (9,0) circle (2pt);

\draw [|->](4.3,1)--(5.5,1);

\end{tikzpicture}
$$
\end{figure}

Our space of interest is the real locus of this variety $M_n:=\overline{\mathcal{M}}_{0,n}(\mathbb{R})$. It can be described as the set of equivalence classes of stable curves with $n$ marked points defined over $\mathbb{R}$. Since the projective lines are circles over the real numbers, a stable curve is a tree of circles with labeled points on them (as referred to  in \cite{EHKR} as a ``cactus-like'' structure). 
The space $M_n$ is a compact and connected smooth manifold of dimension $n-3$ (see \cite{DJS} and \cite{DEV}). It is clear that $M_3$ is a point and the cross ratio give us an isomorphism between 
$M_4$ and the circle. It can also be shown that $M_5$ is a connected sum
of five real projective planes (see \cite{DEV}).% In \cite{DJS} it is proven that $M_n$ is a $K(\pi,1)$-space.

The cohomology ring of $M_n$ was completely determined in \cite{EHKR}. We recall their description next.

\begin{theorem}[\cite{EHKR} 2.9]\label{thm:cohomology ring} Let $\Lambda_n$ be the skew-commutative algebra generated by the elements of degree one $w_{ijkl}$, $1\leq i,j,k,l\leq n$, which are antisymmetric  in $i,j,k,l$\footnote{Meaning $-w_{ijkl}=w_{jikl}=w_{ikjl}=w_{ijlk}$.}  with defining relations
$$w_{ijkl}+w_{jklm}+w_{klmi}+w_{lmij}+w_{mijk}=0$$
$$w_{ijkl}w_{ijkm}=0.$$
Then $\Lambda_n$ is isomorphic to $H^*(M_n,\mathbb{Q})$, and the action of $S_n$ is given by 
$$\pi(w_{ijkl})=w_{\pi(i)\pi(j)\pi(k)\pi(l)}.$$
\end{theorem}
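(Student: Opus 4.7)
The plan is to construct an explicit $S_n$-equivariant algebra homomorphism $\Phi \colon \Lambda_n \to H^*(M_n,\mathbb{Q})$, verify the defining relations, and then establish bijectivity by comparing Hilbert series. The building block for the map is the isomorphism $M_4 \cong S^1$: for every four-subset $\{i,j,k,l\} \subseteq [n]$, the forgetful morphism $\phi_{ijkl} \colon M_n \to M_4$ (described in Section \ref{REALMODULI}) induces a map on cohomology, and I would set $\Phi(w_{ijkl}) := \phi_{ijkl}^*(\omega_{ijkl})$, where $\omega_{ijkl}$ is the canonical generator of $H^1(M_4,\mathbb{Q})$ corresponding to the orientation of $S^1$ determined by the cyclic order $(i,j,k,l)$. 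The antisymmetry in the four indices is then forced by the fact that an odd permutation of the marked points of $M_4$ reverses its orientation, and $S_n$-equivariance is built in by naturality of the forgetful morphisms.

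Next I would verify the two algebraic relations. For the quadratic relation $w_{ijkl}\, w_{ijkm}=0$, the two classes factor through the forgetful morphism $M_n \to M_5$ associated with the five labels $\{i,j,k,l,m\}$, so the identity reduces to a computation on $M_5$ (a non-orientable surface, well understood as in \cite{DEV}), where the product of the corresponding two $1$-classes can be seen to vanish, either by dimension and support considerations or by an explicit representative-level argument. For the five-term relation, pick five indices and again pull back from $M_5$; the identity then becomes a concrete cohomological relation among the ten $w$-generators on $M_5$, which I would verify directly from the cactus-like structure and the circle decomposition of $M_5$. Granting these checks, $\Phi$ descends to a well-defined graded algebra homomorphism.

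Surjectivity I would obtain by an induction on $n$, leveraging the fact, stated in the introduction and recalled from \cite{EHKR}, that $H^*(M_n,\mathbb{Q})$ is multiplicatively generated by its first cohomology together with the classes pulled back along forgetful morphisms from $M_4=S^1$; by construction these are exactly the $\Phi(w_{ijkl})$. The substantial content, and the step I expect to be the main obstacle, is injectivity. Here I would compute the Hilbert series of $\Lambda_n$ combinatorially by constructing a PBW/Gr\"obner-type basis in which the monomial relation $w_{ijkl}w_{ijkm}=0$ serves as a restriction on admissible products and the five-term relation plays the role of a straightening rule (analogous to the Orlik--Solomon or Arnold setting for the complex configuration space). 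One then compares this count against $\dim_{\mathbb{Q}} H^i(M_n,\mathbb{Q})$, computed via the CW-decomposition of $M_n$ into associahedra from \cite{DJS} (or, alternatively, via the iterated real blow-up description in the spirit of Kapranov's construction for $\overline{\mathcal{M}}_{0,n}$). Matching the two Poincar\'e series, ideally through an $S_n$-equivariant bijection between an explicit basis of $\Lambda_n$ indexed by forests and an equivariant basis of $H^*(M_n,\mathbb{Q})$, is where the real work lies and is the crux of the theorem.
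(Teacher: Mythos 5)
First, a point of context: this paper does not prove \autoref{thm:cohomology ring} at all --- it is quoted verbatim from \cite{EHKR} (their Theorem 2.9) and used as an input, so there is no ``paper's proof'' to match yours against; the comparison has to be with the argument in \cite{EHKR} itself. The first half of your plan is the standard and correct opening move and essentially coincides with how the classes are constructed there: define $\Phi(w_{ijkl})$ as the pullback of the generator of $H^1(M_4,\mathbb{Q})\cong\mathbb{Q}$ along the forgetful map, get antisymmetry from the fact that transpositions act on $M_4\cong\mathbb{RP}^1$ by orientation-reversing M\"obius transformations, and check both relations after pulling back from $M_5$ (indeed the quadratic relation is immediate there, since $M_5$ is a closed non-orientable surface, so $H^2(M_5;\mathbb{Q})=0$, and the five-term relation is a finite check in the four-dimensional space $H^1(M_5;\mathbb{Q})$). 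Up to this point the proposal is sound.

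The gap is in everything after that, which is exactly the substance of the theorem. Your surjectivity argument is circular: the statement that $H^*(M_n,\mathbb{Q})$ is multiplicatively generated by the degree-one classes pulled back from $M_4$ is not an independent fact ``recalled from the introduction'' --- it is precisely the surjectivity half of \autoref{thm:cohomology ring}, so it cannot be invoked as an input. For injectivity, you defer the entire problem to ``matching Poincar\'e series,'' but both sides of that comparison are serious theorems, not routine computations: a Gr\"obner/straightening basis for $\Lambda_n$ must actually be exhibited (the five-term relation does not obviously give a PBW-type rewriting system the way the Arnol'd relation does in the Orlik--Solomon setting), and the rational Betti numbers of $M_n$ cannot simply be read off the associahedral cell structure of \cite{DJS} --- that complex is far from minimal, $H^*(M_n;\mathbb{Z})$ has large $2$-torsion, $M_n$ is non-orientable and non-formal for $n>5$, so cell counts do not compute $b_i$. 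In fact the Poincar\'e polynomial of $M_n$ is itself Theorem 6.4 of \cite{EHKR} (quoted here as \autoref{BETTICACTUS}), proved in tandem with the ring computation by genuinely different methods (an inductive/operadic analysis of the fibers of the forgetful maps, which are not fibrations), and Rains's later work \cite{RAINS} refines it. So as written the proposal establishes only a well-defined $S_n$-equivariant map $\Lambda_n\to H^*(M_n,\mathbb{Q})$; the bijectivity --- the actual content of the theorem --- remains unproved, and the route you sketch for it would not go through without substantial new input.
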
	

As we mentioned before, for $n,m\geq 3$ and each $m$-subset $S$ of $[n]$, there is a well-defined forgetful map  $\phi_S:M_n\rightarrow M_m$ which induces a map in cohomology
$\phi_S^*:H^*(M_m,\mathbb{Q})\rightarrow H^*(M_n,\mathbb{Q}).$ If $m=4$ and $S=\{i,j,k,l\}$ with $1\leq i<j<k<l\leq n$, the induced map  $\phi_S^*: H^*(M_4,\mathbb{Q})\rightarrow H^*(M_n,\mathbb{Q})$  takes the generator of  $H^*(M_4,\mathbb{Q})\cong\mathbb{Q}$  to $w_{ijkl}$.

Observe that since  $w_{ijkl}=w_{1jkl}-w_{1kli}+w_{1lij}-w_{1ijk}$, the set $${\{w_{1ijk}|1<i<j<k\leq n\}}$$ is a generating set for the algebra $\Lambda_n$. Also note that since there are not other linear relations that set is also a basis for the vector space $H^1(M_n,\mathbb{Q})$, we will use this fact below.

\section{Representation stability and $FI$-modules }\label{FIREP}

%The main tool in the proof of the representation stability is the machinery of 
In this section we revisit what it means for a sequence of $S_n$-representations  to be uniformily representation stable in the sense of \cite{CF}
%((in particular we are interested in the phenomenon of representation stability))
and we survey the main notions  from the  theory of $FI$-modules developed by Church, Ellenberg and Farb in \cite{CEF} that can be used to prove such phenomenon. Our main purpose is to discuss how these ideas can be readily applied to certain sequences of groups or spaces (see examples in Section \ref{OTROS})  with cohomology rings that are known to be generated by the first cohomology group.  This approach was first introduced in \cite[Section 4.2]{CEF} for a more general setting and was applied to another collection of examples in \cite[Section 5]{CEF}.
We  illustrate the concepts with the example given by the cohomology of the moduli spaces $M_n$ and obtain the proof of ~\autoref{thm:main}. %\medskip

\subsection{Representation stability.}
Church and Farb introduced in \cite{CF} a notion of stability for certain sequences of $S_n$- representations over a field $k$ of characteristic zero. We recall their definitions here.

\begin{definition}
A sequence  $\{V_n, \phi_n\}_n$ of $S_n$-representations over $k$ together with homomorphisms $\phi_n :V_n\rightarrow V_{n+1}$ is said to be {\it consistent} if the maps $\phi_n$ are $S_n$-equivariant with respect to the natural inclusion $S_n\hookrightarrow S_{n+1}$.%the following diagram commutes for each $\sigma$ in $S_n$.\\
%$$
%\begin{tikzpicture} 

%\draw[step=.5cm,gray,very thin] (0,0) grid (10,4);
%\draw(0,0)node(1){$V_{n+1}$};%esquina 1
%\draw(2.2,0)node(1){$V_{n+1}$};%esquina 2
%\draw[->](0.35,0.1)--(1.8,0.1);%abajo
%\draw(0,2)node(1){$V_{n}$};%esquina 3
%\draw(2.2,2)node(1){$V_{n}$};%esquina 4
%\draw[->](0.3,2)--(1.8,2);%arriba
%\draw[->](0,1.8)--(0,.2);%izquierda
%\draw[->](2,1.8)--(2,.2);%derecha
%\draw(1,.2)node(3){$\sigma$};
%\draw(1,2.1)node(3){$\sigma$};
%\draw(-.2,1)node(4){$\phi$};
%\draw(2.2,1)node(4){$\phi$};
%\end{tikzpicture}
%$$

\end{definition}

\noindent {\bf Notation:} 
Over a field of characteristic zero, irreducible representations of $S_n$ are defined over $\mathbb{Q}$ and every $S_n$-representation decomposes as a direct sum of irreducible representations. Furthermore, irreducible representations of $S_n$ are classified  by partitions of $n$. By a partition of $n$ we mean a collection $\lambda=(l_1,\ldots,l_r)$ with ${|\lambda|:=l_1+\cdots +l_r=n}$ and $l_i\geq l_{i+1}>0$. We will denote such a partition by $\lambda\vdash n$ and the associated irreducible representation by $V_\lambda$.
 Following the notation in \cite{CEF}, given a partition $\lambda=(l_1,\ldots,l_j)\vdash m$ of a positive integer $m$ and an integer $n\geq m+l_1$, we define the $S_n$ representation $V(\lambda)_n$ as 
$$V(\lambda)_n:=V_{\lambda[n]},$$ 
the irreducible representation that corresponds to the padded partition  $\lambda[n]:=(n-m,l_1,\ldots,\l_j)$ of $n$.

\begin{definition}\label{def:rep stab}(Uniform representation stability) 
Let $\{V_n,\phi_n\}_n$ be a consistent sequence of $S_n$-representations. This sequence is {\it uniformly representation stable}  if there exists a natural number $N$ such that for $n\geq N$ the following conditions are satisfied:

\begin{enumerate}
\item  The map $\phi_n$ is injective.
\item  The span of the $S_{n+1}$-orbit of $\phi_n(V_n)$ is equal to $V_{n+1}$.
\item If $V_n$ is decomposed into irreducible representations as $\bigoplus_\lambda c_{\lambda,n}V(\lambda)_n$, 
 the multiplicities $c_{\lambda, n}$ for each $\lambda$ are independent of $n$.
 \end{enumerate}
%We say that with {\it stable range} $n\geq N$,
 %the multiplicities $c_{\lambda, n}$ are independent of $n$ for every $\lambda$, the sequence $\{V_n\}$ is called {\it uniformly representation stable} .
\end{definition}

\subsection{\bf The $FI$-module structure.} It was noticed in \cite{CEF} that certain consistent sequences of $S_n$-representations could be encoded as a single object and this perspective had strong advantages. Using their notation, we consider the category $FI$ whose objects are all finite sets and whose morphisms are all injections. 
\begin{definition}
An {\it $FI$-module} over a commutative ring $R$ is a functor from the category $FI$ to the category of $R$-modules. If $V$ is an {\it$FI$-module} we denote by $V_n$ or $V(\bf{n})$ the $R$-module associated to $\mathbf{n} :=[n]$ and by $f_*:V_m\rightarrow V_n$ the map corresponding to the inclusion $f\in\text{Hom}_{FI}(\mathbf{m},\mathbf{n})$.
\end{definition}

The category of $FI$-modules over $R$ is closed under covariant functorial constructions on $R$-modules, if we apply functors pointwise. In particular, if $V$ and $W$ are $FI$-modules, then  $V \otimes W$ and $V \oplus W$ are $FI$-modules.

Notice that since End$_{FI}$($\bf{n}$)= $S_n$, an $FI$-module  $V$ encodes the information of the consistent sequence $\{V_n, (I_n)_*\}$ of $S_n$-representations with the maps induced by the natural inclusion $I_n:[n]\hookrightarrow [n+1]$. It should be noted that not every consistent sequence can be encoded in this way \cite[Remark 3.3.1]{CEF}. \medskip 

 \noindent{\bf The $FI$-modules $H^i(M_\bullet,\mathbb{Q})$.} In this paper we are interested in certain sequences of spaces or groups that can be encoded as a contravariant functor $Y_\bullet$  from the category $FI$ to the category of spaces, a {\it co-FI-space}, or to the category of groups, a {\it co-FI-group}. Therefore,  by composing with the contravariant functors $H^i(-,R)$ and $H^*(-,R)$, we obtain the $FI$-modules $H^i(Y_\bullet,R)$ over $R$   for each $i\geq 0$ and $H^*(Y_\bullet,R)$ which is a functor from $FI$ to the category of graded $R$-algebras, what \cite{CEF} calls a {\it graded FI-algebra} over $R$, . 

In particular, we can think of the sequence of manifolds $\{M_n\}$ as the objects of the  co-$FI$-space $M_\bullet$ that takes each $\mathbf{n}$ to $M_n$ and each inclusion $f\in\text{Hom}_{FI}(\mathbf{m},\mathbf{n})$ to the corresponding forgetful map $\phi_f: M_n\rightarrow M_m$ defined as before. In this section, for each $i\geq0$ we sometimes denote the $FI$-module $H^i(M_\bullet,\mathbb{Q})$ simply by $H^i$ and the graded $FI$-algebra $H^*(M_\bullet,\mathbb{Q})$ by $H^*$.

\subsection{\bf Finite generation of an $FI$-module.}\label{FINITE} 
We are not interested in all $FI$-modules, but in those that are finitely generated in the following sense.	

\begin{definition}

\begin{enumerate}[a)]

 Let $V$ be an $FI$-module over $R$.
\item If  $\Sigma$ is a subset of the disjoint union $\bigsqcup_n V_n$ we define  span$_V(\Sigma)$, the {\it span of $\Sigma$}, to be the minimal sub-$FI$-module of $V$ containing $\Sigma$ and we say that $\Sigma$ {\it generates} the FI-module  span$_V(\Sigma)$.
\item We say that $V$ is {\it generated in degree $\leq m$}  if span${_V\big(\bigsqcup_{k\leq m}V_k\big)=V}$.% In this case, we also say that  $V$ is generated by elements in $V_k$ with $k\leq m$.
\item We say that $V$ is {\it finitely generated} if there is a finite set of elements $\{v_1,\ldots, v_k\}$ with $v_i\in V_{n_i}$ which generates $V$, that is, span${_V(\{v_1,\ldots,v_k\})=V}$.

\end{enumerate}
\end{definition}

Finite generation of $FI$-modules is a property that is closed under quotients and extensions. It also  passes to sub-$FI$-modules when $R$ is a field that contains $\mathbb{Q}$ (see \cite[Theorem 1.3]{CEF}) and more generally when $R$ is a Noetherian ring (see \cite[Theorem A]{CEFN}). It is key that finite generation is closed under tensor products and we have control of the degree of generation.

\begin{proposition}\label{TENSOR}
\cite[Prop. 2.3.6]{CEF}  
If $V$ and $W$ are finitely generated $FI$-modules, so is $V \otimes W$. If $V$ is generated in degree $\leq ≤ m_1$ and $W$ is generated in degree $\leq m_2$, then
$V \otimes W$ is generated in degree $\leq m_1+m_2$.
\end{proposition}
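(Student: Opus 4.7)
The plan is to reduce the statement to an elementary observation about how pushforwards of tensors factor through the union of the images of the injections involved.

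First I would note that since $(V\otimes W)_n = V_n\otimes_R W_n$, every element of $(V\otimes W)_n$ is an $R$-linear combination of elementary tensors $v\otimes w$ with $v\in V_n$ and $w\in W_n$. So it suffices to show each such elementary tensor lies in the span of $\bigsqcup_{k\leq m_1+m_2}(V\otimes W)_k$. Using the hypothesis that $V$ is generated in degree $\leq m_1$, I would write
\[
v \;=\; \sum_i f_{i*}\,v'_i, \qquad v'_i\in V_{a_i},\ a_i\leq m_1,\ f_i\in\mathrm{Hom}_{FI}(\mathbf{a_i},\mathbf{n}),
\]
and similarly $w = \sum_j g_{j*}\,w'_j$ with $w'_j\in W_{b_j}$, $b_j\leq m_2$, and $g_j\in\mathrm{Hom}_{FI}(\mathbf{b_j},\mathbf{n})$. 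By bilinearity, $v\otimes w$ is a sum of terms of the form $(f_* v')\otimes(g_* w')$.

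The key step is to factor such a term through a smaller object. Given injections $f\colon\mathbf{a}\hookrightarrow\mathbf{n}$ and $g\colon\mathbf{b}\hookrightarrow\mathbf{n}$, set $S = f([a])\cup g([b])\subseteq[n]$, so $|S|\leq a+b\leq m_1+m_2$. Choose any bijection $h_0\colon \mathbf{|S|}\xrightarrow{\sim} S$ and let $h\colon\mathbf{|S|}\hookrightarrow\mathbf{n}$ be the induced injection. Then $f$ and $g$ factor uniquely as $f = h\circ f'$ and $g = h\circ g'$ with $f'\colon\mathbf{a}\hookrightarrow\mathbf{|S|}$ and $g'\colon\mathbf{b}\hookrightarrow\mathbf{|S|}$. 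By the functoriality of the $FI$-module structure and of the tensor product,
\[
(f_* v')\otimes(g_* w') \;=\; h_*(f'_* v')\otimes h_*(g'_* w') \;=\; h_*\bigl((f'_* v')\otimes(g'_* w')\bigr),
\]
which is the image of an element of $(V\otimes W)_{|S|}$ under the morphism $h_*$. Since $|S|\leq m_1+m_2$, this proves the degree bound.

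For finite generation, if $\{v_1,\dots,v_k\}$ and $\{w_1,\dots,w_\ell\}$ are finite generating sets (say $v_i\in V_{a_i}$, $w_j\in W_{b_j}$), the above argument shows that $V\otimes W$ is generated by the finite set
\[
\bigl\{\,(f_* v_i)\otimes(g_* w_j)\ \bigm|\ 1\leq i\leq k,\ 1\leq j\leq \ell,\ n\leq a_i+b_j,\ f\colon\mathbf{a_i}\hookrightarrow\mathbf{n},\ g\colon\mathbf{b_j}\hookrightarrow\mathbf{n}\,\bigr\},
\]
which has only finitely many elements since for each fixed pair $(i,j)$ there are only finitely many injections from $\mathbf{a_i}$ and $\mathbf{b_j}$ into any $\mathbf{n}$ with $n\leq a_i+b_j$. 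The only subtle point in the argument is the factorization through $\mathbf{|S|}$: one must be careful to use the union of images (and not the disjoint union) to obtain the sharp bound $m_1+m_2$ rather than a weaker bound like $2(m_1+m_2)$, and this requires invoking functoriality to handle the case where $f$ and $g$ have overlapping images.
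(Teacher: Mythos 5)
Your argument is correct: writing each class in terms of pushforwards of generators, and then factoring a pair of injections $f\colon\mathbf{a}\hookrightarrow\mathbf{n}$, $g\colon\mathbf{b}\hookrightarrow\mathbf{n}$ through an injection $h$ whose image is $f([a])\cup g([b])$, together with $(V\otimes W)(h)=V(h)\otimes W(h)$, is exactly what is needed, and the finiteness count at the end is fine. Note that the paper itself gives no proof of this statement --- it is quoted from \cite[Prop.~2.3.6]{CEF} --- and your argument is essentially the standard one found there, so there is nothing to reconcile. One small quibble: your closing remark is slightly off, since factoring through a set of size $a+b$ (a ``disjoint union'') would still give the bound $m_1+m_2$, not $2(m_1+m_2)$; the real point, which your proof handles correctly, is simply that both $f$ and $g$ must factor through a \emph{common} injection $h$, and the union of images is the natural choice.
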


Our examples below are graded $FI$-algebras and we will take advantage of this structure to obtain finite generation.

\begin{definition}
Let $A$ be a graded $FI$-algebra over $R$. Given a graded sub-$FI$-module $V$ of $A$, we say that {\it $A$ is generated by $V$} if $V_n$ generates $A_n$ as an $R$-algebra for all $n\geq 0$.
\end{definition}

From ~\autoref{thm:cohomology ring} , the graded $FI$-algebra $H^*(M_\bullet,R)$ is generated by the $FI$-module $H^1(M_\bullet,R)$ (which can be thought as the graded sub-$FI$-module concentrated in grading $1$). In all the examples in Section \ref{OTROS}, the graded $FI$-algebra of interest $H^*(Y_\bullet,R)$ is  a quotient of the free tensor algebra on the generating classes in $H^1(Y_\bullet,R)$.  Therefore, Proposition \ref{TENSOR} reduces the question of finite generation for the $FI$-modules $H^i(Y_\bullet,R)$ to a question of finite generation for the $FI$-module $H^1(Y_\bullet,R)$. 

\begin{proposition}\label{GENBYH1}
Suppose that the $FI$-module $H^1(Y_\bullet,R)$ is finitely generated in degree $\leq g$. If the graded $FI$-algebra $H^*(Y_\bullet,R)$ is generated by $H^1(Y_\bullet,R)$, then for each $i\geq 1$ the FI-module $H^i(Y_\bullet,R)$ is finitely generated in degree $\leq g\cdot i$.
\end{proposition}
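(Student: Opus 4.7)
The strategy is to realize $H^i(Y_\bullet,R)$ as a quotient $FI$-module of the $i$-fold tensor power $H^1(Y_\bullet,R)^{\otimes i}$, and then invoke the degree-additivity of Proposition \ref{TENSOR} together with the fact that quotients preserve finite generation.

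First I would construct, for each $i \geq 1$, the $i$-fold cup product map
\[
\mu_i\colon H^1(Y_\bullet,R)^{\otimes i} \longrightarrow H^i(Y_\bullet,R),
\qquad \alpha_1 \otimes \cdots \otimes \alpha_i \longmapsto \alpha_1 \smile \cdots \smile \alpha_i,
\]
evaluated levelwise at each $\mathbf{n}$. The key observation is that $\mu_i$ is actually a morphism of $FI$-modules: for every injection $f \in \mathrm{Hom}_{FI}(\mathbf{m},\mathbf{n})$, the induced map $f^*\colon H^*(Y_m,R) \to H^*(Y_n,R)$ is a graded ring homomorphism, so it commutes with cup products. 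This is what makes the cup product a morphism in the category of graded $FI$-algebras, and in particular $\mu_i$ is natural.

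Next I would observe that the hypothesis that $H^*(Y_\bullet,R)$ is generated as a graded $FI$-algebra by $H^1(Y_\bullet,R)$ means exactly that, for every $n$, the $R$-algebra $H^*(Y_n,R)$ is generated by $H^1(Y_n,R)$. Therefore the componentwise map $(\mu_i)_n\colon H^1(Y_n,R)^{\otimes i} \to H^i(Y_n,R)$ is surjective for every $n$, which makes $H^i(Y_\bullet,R)$ a quotient $FI$-module of $H^1(Y_\bullet,R)^{\otimes i}$.

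Finally I would apply Proposition \ref{TENSOR} inductively: if $H^1(Y_\bullet,R)$ is finitely generated in degree $\leq g$, then a straightforward induction on $i$ shows that $H^1(Y_\bullet,R)^{\otimes i}$ is finitely generated in degree $\leq g\cdot i$. Since finite generation of $FI$-modules is preserved under quotients (with the degree of generation only decreasing), the quotient $H^i(Y_\bullet,R)$ is finitely generated in degree $\leq g\cdot i$, as desired. The only subtle step is the naturality of the cup product, but this is a general categorical property of contravariant cohomology functors; once it is in hand, the rest is a clean application of the tensor-product and quotient properties already stated in the excerpt.
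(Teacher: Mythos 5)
Your proposal is correct and is essentially the argument the paper intends: the paper states the proposition without a detailed proof, but its surrounding discussion (realizing $H^*(Y_\bullet,R)$ as a quotient of the tensor algebra on $H^1(Y_\bullet,R)$ and invoking Proposition \ref{TENSOR}) is exactly your cup-product surjection $H^1(Y_\bullet,R)^{\otimes i}\twoheadrightarrow H^i(Y_\bullet,R)$ combined with degree-additivity for tensor products and preservation of finite generation under quotients.
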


\noindent {\bf Degree of generation of $H^i(M_\bullet,\mathbb{Q})$}. Using the explicit description of the cohomology ring in ~\autoref{thm:cohomology ring} we can obtain finite generation for the $FI$-modules $H^i$ from Proposition \ref{GENBYH1}.  Moreover, we can obtain the following  upper bound for the degree of generation.\\

Notice that, in particular, $H^1$ is finitely generated in degree $\leq 4$ and  Proposition \ref{GENBYH1} only implies that $H^{i}$ is finitely generated in degree $\leq 4i$.\\
\begin{lemma}\label{lemmadeg}
The $FI$-module $H^i$ is finitely generated in degree $\leq 3i+1$.
\end{lemma}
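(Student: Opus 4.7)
The plan is to refine the bound from Proposition~\ref{GENBYH1} by using the sharper generating set for $H^1(M_n,\mathbb{Q})$ highlighted just after Theorem~\ref{thm:cohomology ring}, namely
\[ \{\,w_{1ijk} \mid 1 < i < j < k \leq n\,\}, \]
whose elements involve only four indices, one of which is pinned to $1$.

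First, I would observe that by Theorem~\ref{thm:cohomology ring} the algebra $H^*(M_n,\mathbb{Q}) \cong \Lambda_n$ is generated in degree one, and by the remark following that theorem the set above is a basis of $H^1(M_n,\mathbb{Q})$ that already generates $\Lambda_n$ as an algebra. Consequently any element of $H^i(M_n,\mathbb{Q})$ can be written as a $\mathbb{Q}$-linear combination of $i$-fold products
\[ w_{1\,a_1\,b_1\,c_1}\cdot w_{1\,a_2\,b_2\,c_2}\cdots w_{1\,a_i\,b_i\,c_i}. \]
All indices appearing in such a product lie in the set $S := \{1,a_1,b_1,c_1,\ldots,a_i,b_i,c_i\}$, which has cardinality at most $3i+1$.

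Next, I would exploit the fact that the $FI$-structure map associated to an injection $f\in\operatorname{Hom}_{FI}(\mathbf{m},\mathbf{n})$ is induced by the forgetful map $\phi_f:M_n\to M_m$ and is therefore a ring homomorphism $\phi_f^*:H^*(M_m,\mathbb{Q})\to H^*(M_n,\mathbb{Q})$ which, by the description recalled just after Theorem~\ref{thm:cohomology ring}, sends a degree-one generator $w_{pqrs}$ to $w_{f(p)f(q)f(r)f(s)}$. Taking $f$ to be the order-preserving injection $[|S|]\hookrightarrow[n]$ with image $S$, the $i$-fold product above is the image under $\phi_f^*$ of the corresponding product living in $H^i(M_{|S|},\mathbb{Q})$. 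Since $|S|\leq 3i+1$, this shows that every element of $H^i(M_n,\mathbb{Q})$ lies in the $FI$-submodule generated by $\bigsqcup_{m\leq 3i+1}H^i(M_m,\mathbb{Q})$, which is exactly the statement that $H^i$ is generated in degree $\leq 3i+1$.

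The argument is really just careful bookkeeping of indices, so no serious obstacle is expected. The only conceptual point is to replace the naive four-index generators $w_{ijkl}$ by the three-free-index family $w_{1ijk}$; this saves one index per degree-one factor and sharpens the bound from $4i$ (the output of Proposition~\ref{GENBYH1} with $g=4$) to $3i+1$.
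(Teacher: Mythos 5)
Your proof is correct and takes essentially the same approach as the paper: express classes as $i$-fold products of the generators $w_{1jkl}$, note that at most $3i+1$ indices occur, and realize each such product as the image of a class from $H^i(M_m,\mathbb{Q})$ with $m\leq 3i+1$ under an $FI$-structure map (the paper does this via a permutation $\sigma$ and the standard inclusion rather than your order-preserving injection onto the index set, but this is the same bookkeeping). The only point left implicit is that generation in degree $\leq 3i+1$ gives \emph{finite} generation because each $H^i(M_m,\mathbb{Q})$ with $m\leq 3i+1$ is finite dimensional, which the paper states explicitly.
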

\begin{proof}
Recall that $H^1(M_n,\mathbb{Q})$ is generated by elements $w_{1klm}$ with  $k,l,m$ between $2$ and $n$, and also that it generates the whole cohomology ring. Hence the vector space $H^i(M_n,\mathbb{Q})$ is generated by elements $$x=w_{1k_1l_1m_1}\cdots w_{1k_il_im_i}$$ with the indices varying from 2 to $n$. %Hence, if ${n\geq 3i+1}$, the vector space $H^i(M_n,\mathbb{Q})$ is nontrivial and
 A generator $\mbox{element}$ $x$ has at most $3i+1$ different indices in the set $[n]$. Then, $\mbox{after}$ the $\mbox{action}$ of some permutation $\sigma$ of $S_n$,  $\sigma\cdot x$  is in the image of $H^i(M_{3i+1},\mathbb{Q})$ inside $H^i(M_n,\mathbb{Q})$. Hence 
$${x=\sigma^{-1}(\sigma\cdot x)}  \in \mbox{span}_{H^i}(H^i_{3i+1}),$$ therefore $H^{i}=$ span$_{H^{i}}(H^{i}_{3i+1})$. Since $H^{i}(M_{3i+1},\mathbb{Q})$ is of finite dimension, we conclude that the $FI$-module $H^{i}$ is finitely generated in degree $\leq 3i+1$. 
\end{proof}
%Notice that, in particular, $H^1$ is finitely generated in degree $\leq 4$ and  Proposition \ref{GENBYH1} only implies that $H^{i}$ is finitely generated in degree $\leq 4i$.\\

One of the highlights of the theory of $FI$-modules is that finite generation of an $FI$-module over a field $k$ of characteristic zero is equivalent to uniform representation stability of the corresponding consistent sequence.

%We now discuss one of the main consquences of being a finitely generated $FI$-module.

%Let $I_n:[n]\hookrightarrow [n+1]$ be the natural inclusion. Notice that since End$_{FI}$($\bf{n}$)= $S_n$, for any $FI$-module $V$ over  $k$ the collection $\{V_n, (I_n)_*\}$ is a consistent sequence of $S_n$-representations. It turns out that 

\begin{theorem}\label{thm:FI}\cite[Thm 1.13 and Prop. 3.3.3]{CEF}
An $FI$-module over $k$ is finitely generated if and only if the sequence $\{V_n,(I_n)_* \}$ of $S_n$-representations is uniformly representation stable and each $V_n$ is finite dimensional. Furthermore, the stable range is $n\geq s+d$ where $d$ and $s$ are the weight and the stability degree of $V$ respectively.
\end{theorem}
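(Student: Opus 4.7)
The plan is to prove the stated equivalence in two directions, and then extract the explicit range $n \geq s+d$ from a structural decomposition of finitely generated $FI$-modules. For the reverse implication, assume the sequence $\{V_n,(I_n)_*\}$ is uniformly representation stable from some index $N$ onward and that each $V_n$ is finite-dimensional. By part (2) of \autoref{def:rep stab}, the $S_{n+1}$-orbit of $(I_n)_*(V_n)$ spans $V_{n+1}$ for $n\geq N$, so by induction every $V_n$ with $n\geq N$ is spanned by the $S_n$-orbit of the image of $V_N$. Choosing a basis for $V_N$ together with bases for $V_0,\ldots,V_{N-1}$ produces a finite set that generates $V$ as an $FI$-module.

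For the forward implication (finitely generated $\Rightarrow$ uniform representation stability with each $V_n$ finite-dimensional), I would first note that each $V_n$ is the image of a finite-dimensional space under the finite set $\bigsqcup_i \text{Hom}_{FI}([m_i],[n])$ acting on a chosen generating set, hence finite-dimensional. The key structural step is that every finitely generated $FI$-module is a quotient of a finite direct sum of \textit{representable} $FI$-modules $M(m)$, where $M(m)_n := k[\text{Hom}_{FI}([m],[n])]$. This reduces the theorem to (a) proving uniform representation stability for each $M(m)$ with an explicit range, and (b) showing that stability and its range pass appropriately to quotients and hence to any finitely generated $FI$-module.

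For (a), I would decompose $M(m)_n$ by partitioning $\text{Hom}_{FI}([m],[n])$ into $S_n$-orbits; each orbit is an induced representation of the form $\text{Ind}_{S_{n-m}}^{S_n} k$, and the classical Pieri or branching rule yields the multiplicity of each irreducible $V(\lambda)_n$. One reads off that these multiplicities stabilize in $n$ with a range depending only on $m$. For (b), the right framework is the two numerical invariants introduced in \cite{CEF}: the \emph{weight} $d(V)$, bounding the row lengths of the partitions indexing irreducible constituents of $V_n$, and the \emph{stability degree} $s(V)$, controlling when the $S_n$-coinvariants stabilize. Both behave well under quotients of representables, and combining them yields the claimed range $n\geq s+d$.

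The main obstacle is precisely the combinatorial bookkeeping that goes into defining weight and stability degree so that they (i) have computable values on the representables $M(m)$, and (ii) are inherited under subobjects, quotients, and extensions. The qualitative forward direction---that multiplicities eventually stabilize---is comparatively easy once $M(m)$ has been analysed explicitly, but producing the sharp range $n\geq s+d$ is where the bulk of the labor in \cite{CEF} lies; any honest proof attempt must reproduce that machinery carefully.
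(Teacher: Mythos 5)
You should first note the ground truth here: the paper does not prove \autoref{thm:FI} at all --- it is imported verbatim from \cite{CEF} (Theorem 1.13 together with Proposition 3.3.3), so there is no in-paper argument to compare with; what you have written is a compressed roadmap of the proof in \cite{CEF} itself. Judged on those terms, your easy direction is fine (condition (2) of Definition~\ref{def:rep stab} gives generation in degree $\leq N$, and finite-dimensionality of $V_0,\dots,V_N$ gives a finite generating set), and your reduction of the hard direction to the representables $M(m)$ is the correct opening move, with one small slip: $S_n$ acts transitively on $\mathrm{Hom}_{FI}([m],[n])$ for $n\geq m$, so there is no partition into several orbits --- the whole module is $M(m)_n\cong \mathrm{Ind}_{S_{n-m}}^{S_n}k$, and Pieri/branching then gives multiplicities that are eventually independent of $n$.

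The genuine gap is step (b), where you assert that stability ``and its range pass appropriately to quotients.'' That is precisely the nontrivial content, and nothing in your sketch supplies it. For a presentation $\bigoplus_i M(m_i)\twoheadrightarrow V$ with kernel $K$, the surjectivity-type statements do pass to $V$, but condition (1) of Definition~\ref{def:rep stab} and the eventual constancy of multiplicities for $V$ require control of $K$: one needs the coinvariants of $K$ (equivalently, a bound on its degree of generation) to stabilize, and in \cite{CEF} this is exactly where the Noetherian property of $FI$-modules over a field containing $\mathbb{Q}$ enters (\cite[Theorem 1.3]{CEF}, extended to Noetherian rings in \cite{CEFN}); your outline never invokes it, and without it ``inherited under quotients'' is unjustified for the injectivity and multiplicity statements. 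Moreover the explicit range $n\geq s+d$ is not obtained by analyzing $M(m)$ and passing along a presentation: it is the content of \cite[Prop.\ 3.3.3]{CEF}, which takes weight $\leq d$ and stability degree $\leq s$ as hypotheses and deduces all three conditions of Definition~\ref{def:rep stab} from the stabilization of the coinvariants $(V_{\bar{\mathbf{a}}\sqcup\mathbf{n}})_{S_n}$, while the separate assertion that finite generation forces $s<\infty$ is again a consequence of Noetherianity applied to the kernel. So your proposal is an accurate table of contents for the argument of \cite{CEF}, but, as you yourself concede in the final sentence, it defers rather than supplies the two essential ingredients (Noetherianity and the coinvariant analysis behind Prop.\ 3.3.3), and in that sense it is not yet a proof.
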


In Sections \ref{WEIGHT} and \ref{STAB} below we recall the definitions of weight and  stability degree of an $FI$-module. We focus on $FI$-modules over a field $k$ of characteristic zero.\\

\noindent {\bf Representation stability for the cohomology of $M_{n}$.} We just proved finite generation for the $FI$-module $H^i$ in Lemma \ref{lemmadeg}.  Therefore, the theorem above implies uniform representation stability for the sequence $\big\{H^i(M_n,\mathbb{Q})\big\}$. The specific stable range in ~\autoref{thm:main} follows from the computations of weight and stability degree of the $FI$-module $H^i$ in Lemmas \ref{lemmaweight} and \ref{lemmastab} below.

\begin{remark} Representation stability for the cohomology of the pure cactus groups can also be obtained by the methods in \cite{CF} where the notion was first introduced. This approach, however, allows us to prove only the stability of $H^i(M_n,\mathbb{Q})$ with $\mbox{respect}$ to the $S_{n-1}$-action (rather than the action of $S_n$) and we obtain a worse estimate for the stable range than the one achieved using $FI$-modules.
\end{remark}

\subsection{Weight of an FI-module}\label{WEIGHT}

It turns out that finite generation of an $FI$-module $V$ puts certain constraints on the partitions (shape of the Young diagrams) in the irreducible representations of each representation $V_n$ as we discuss below.

\begin{definition}
Let $V$ be an $FI$-module over $k$. We say that $V$ has {\it weight} $d$ if, for all $n\geq 0$, $d$ is the maximum order $|\lambda|$ over all the irreducible constituents $V(\lambda)_n$ of $V_n$. We write weight$(V)=d$.
\end{definition}

By definition, if $W$ is a subquotient of $V$ , then weight$(W)$ $\leq$ weight$(V)$. Moreover, the degree of generation of an $FI$-module gives an upper bound for the weight.

\begin{proposition}\cite[Prop. 3.2.5]{CEF}\label{DEGBOUNDSW} If the $FI$-module $V$ over $k$ is generated in degree $\leq g$, then weight$(V)\leq g$.
\end{proposition}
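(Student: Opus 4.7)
The plan is to turn the generation hypothesis into an explicit $S_n$-equivariant surjection onto each $V_n$ and then read off the bound on weight using Pieri's rule. Unwinding definitions, the goal is to show that for every $n$ and every irreducible constituent $V_\nu$ of $V_n$, the partition $\nu$ has the form $\lambda[n]$ with $|\lambda|\leq g$; equivalently, the cells of the Young diagram of $\nu$ lying below the top row total at most $g$.

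First I would use the fact that every injection $f\colon[k]\hookrightarrow[n]$ factors as $f=\sigma\circ i_{k,n}$, where $i_{k,n}\colon[k]\hookrightarrow[n]$ is the standard inclusion and $\sigma\in S_n$, to construct for each $k\leq g$ an $S_n$-equivariant map
\[
\Phi_k\colon \mathrm{Ind}_{S_k\times S_{n-k}}^{S_n}\bigl(V_k\boxtimes\mathrm{triv}\bigr)\longrightarrow V_n,\qquad \sigma\otimes v\longmapsto \sigma_*\bigl((i_{k,n})_*(v)\bigr).
\]
Well-definedness over $S_k\times S_{n-k}$ splits into two checks: permutations in the $S_{n-k}$ factor fix the image of $i_{k,n}$ pointwise, so they act trivially on $(i_{k,n})_*(v)$; and permutations in the $S_k$ factor act on $(i_{k,n})_*(v)$ exactly as they do on $v$ through the $FI$-structure on $V_k$. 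The generation hypothesis then says precisely that $\bigoplus_{k\leq g}\Phi_k$ is surjective.

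Next I would apply Pieri's rule. Decomposing each $V_k$ as $\bigoplus_\mu m_\mu V_\mu$ with $|\mu|=k\leq g$, one has
\[
\mathrm{Ind}_{S_k\times S_{n-k}}^{S_n}\bigl(V_\mu\boxtimes\mathrm{triv}\bigr)\;\cong\;\bigoplus_{\nu}V_\nu,
\]
summed over partitions $\nu\vdash n$ obtained from $\mu$ by adding a horizontal strip of length $n-k$. The horizontal strip condition $\nu_{i+1}\leq\mu_i$ gives $\sum_{i\geq 2}\nu_i\leq\sum_{i\geq 1}\mu_i=k\leq g$, so every such $\nu$ is of the form $\lambda[n]$ with $|\lambda|\leq g$.

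Finally, since $V_n$ is a quotient of a finite direct sum of these induced modules, each of its irreducible constituents occurs among the $V_\nu$ above, hence is of the form $V(\lambda)_n$ with $|\lambda|\leq g$, which is exactly $\mathrm{weight}(V)\leq g$. The main obstacle is the bookkeeping to define $\Phi_k$ and verify it is well-defined and equivariant; the Pieri step itself is classical, and the surjectivity is a direct rereading of the generation hypothesis.
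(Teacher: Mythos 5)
Your argument is correct: the map $\Phi_k$ is well defined and $S_n$-equivariant by functoriality (since $\tau\circ i_{k,n}=i_{k,n}\circ\tau_1$ for $\tau=(\tau_1,\tau_2)\in S_k\times S_{n-k}$), generation in degree $\leq g$ gives surjectivity of $\bigoplus_{k\leq g}\Phi_k$, and Pieri's rule plus semisimplicity in characteristic zero yields the bound on the constituents. The paper itself only cites \cite[Prop.~3.2.5]{CEF} for this fact, and your proof is essentially the standard one given there, where $V$ is exhibited as a quotient of free $FI$-modules $M(m)$ with $m\leq g$ whose degree-$n$ pieces are exactly such induced representations, so no further comment is needed.
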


This implies, for example,  that for a finitely generated $FI$-module $V$ the alternating representation  $V (\lambda)_n$, where $\lambda = (1,\ldots, 1)$ has $|\lambda|=n-1$,  cannot appear in the decomposition into irreducibles for $n\gg 0$. We also have control of the weight under tensor products.
\begin{proposition}\cite[Prop. 3.2.2]{CEF} If $V$ and $W$ are $FI$-modules over $k$, then weight$(V \otimes W) \leq$ weight$(V ) + $weight$(W)$.
 \end{proposition}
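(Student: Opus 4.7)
The plan is to reduce this to an assertion about Kronecker products of irreducible $S_n$-representations. Set $d_1 = \mathrm{weight}(V)$ and $d_2 = \mathrm{weight}(W)$, and decompose $V_n = \bigoplus_\lambda c_\lambda V(\lambda)_n$ with $|\lambda| \leq d_1$ and $W_n = \bigoplus_\mu c'_\mu V(\mu)_n$ with $|\mu| \leq d_2$. Then $V_n \otimes W_n$ becomes a direct sum of Kronecker products $V(\lambda)_n \otimes V(\mu)_n$. Since weight is monotone under subquotients and additive under direct sums (both of which are immediate from the definition), it suffices to prove the following Kronecker bound: for $n$ large, every irreducible constituent $V(\nu)_n$ of $V(\lambda)_n \otimes V(\mu)_n$ satisfies $|\nu| \leq |\lambda| + |\mu|$.

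The key step, and the main obstacle, is establishing this Kronecker bound. Rephrased in terms of ordinary partitions of $n$: if $\alpha, \beta, \gamma \vdash n$ and $V_\gamma$ appears in $V_\alpha \otimes V_\beta$, I want $n - \gamma_1 \leq (n - \alpha_1) + (n - \beta_1)$, equivalently $\gamma_1 \geq \alpha_1 + \beta_1 - n$. This is a classical fact and a special case of Murnaghan's stability phenomenon for Kronecker coefficients.

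My preferred route to this bound is via character polynomials, which fits the framework already invoked in Theorem~\ref{thm:CHARCACTUS}. The idea is: for $n$ sufficiently large relative to $|\lambda|$, the character $\chi_{V(\lambda)_n}$, viewed as a class function on $S_n$, is given by a unique character polynomial $P_\lambda$ in the cycle-counting functions $X_1, X_2, \ldots$, of weighted degree $|\lambda|$ (declaring $\deg X_l = l$); moreover, the family $\{P_\lambda : |\lambda| \leq d\}$ is linearly independent and spans the space of character polynomials of weighted degree $\leq d$. Since characters are multiplicative under tensor products, $\chi_{V(\lambda)_n \otimes V(\mu)_n} = P_\lambda \cdot P_\mu$, which has weighted degree $\leq |\lambda| + |\mu|$. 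Uniqueness of the expansion in the basis $\{P_\nu\}$ then forces every irreducible constituent $V(\nu)_n$ of $V(\lambda)_n \otimes V(\mu)_n$ to satisfy $|\nu| \leq |\lambda| + |\mu|$, as required.

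A more elementary alternative would be to argue by Frobenius reciprocity and Young's rule: $V_\gamma$ occurs in the permutation module $M_\gamma = \mathrm{Ind}_{S_{\gamma_1} \times S_{n-\gamma_1}}^{S_n} \mathbf{1}$ with multiplicity one, so the vanishing of $\mathrm{Hom}_{S_n}(V_\alpha \otimes V_\beta,\, V_\gamma)$ in the range $\gamma_1 < \alpha_1 + \beta_1 - n$ could be extracted by analyzing the restriction of $V_\alpha \otimes V_\beta$ to the Young subgroup and applying the Littlewood--Richardson rule to each tensor factor. This path is combinatorially fiddlier, so the character-polynomial argument is the one I would write up.
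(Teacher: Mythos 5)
You have identified the right crux --- the classical ``first-row'' bound for Kronecker products, i.e.\ that $V_\gamma\subseteq V_\alpha\otimes V_\beta$ with $\alpha,\beta,\gamma\vdash n$ forces $n-\gamma_1\le(n-\alpha_1)+(n-\beta_1)$ --- and this is indeed what the cited result rests on; note the paper itself gives no proof, it simply quotes \cite{CEF}, so the comparison is with that standard argument. The genuine gap is in your reduction ``it suffices to prove the Kronecker bound for $n$ large.'' Weight, as defined in this paper (and in \cite{CEF}), bounds $|\bar\gamma|$ for the irreducible constituents of $V_n\otimes W_n$ for \emph{every} $n\ge 0$, not just for $n\gg 0$. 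Your character-polynomial argument only covers large $n$: the uniqueness step needs every basis element $P_\nu$ with $|\nu|\le D:=|\lambda|+|\mu|$ to restrict on $S_n$ to the irreducible character $\chi_{V(\nu)_n}$, which requires $n\ge|\nu|+\nu_1$ --- in the worst case $\nu=(D)$, i.e.\ $n\ge 2D$ --- and for smaller $n$ the restriction of $P_\nu$ need not be an irreducible character at all ($V(\nu)_n$ may not even be defined). For $n\le d_1+d_2$ the desired bound is vacuous, since any $\gamma\vdash n$ has $n-\gamma_1\le n$; but in the window $d_1+d_2+2\le n<2(d_1+d_2)$ nothing in your argument excludes constituents of depth $>d_1+d_2$, so the asserted inequality $\mathrm{weight}(V\otimes W)\le d_1+d_2$ is not established. (A small side remark: weight is the \emph{maximum}, not the sum, over direct summands; this slip is harmless for your reduction.)

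The gap is fixable in either of two ways. One can prove the first-row bound for all $n$ directly: with $k=|\lambda|$, the Pieri rule gives $V_{\lambda[n]}\subseteq\mathrm{Ind}_{S_{n-k}\times S_k}^{S_n}\bigl(\mathbf{1}\boxtimes V_\lambda\bigr)$; by the projection formula, $V_{\lambda[n]}\otimes V_{\mu[n]}$ embeds in $\mathrm{Ind}_{S_{n-k}\times S_k}^{S_n}\bigl((\mathbf{1}\boxtimes V_\lambda)\otimes\mathrm{Res}\,V_{\mu[n]}\bigr)$; branching shows each constituent of $\mathrm{Res}_{S_{n-k}\times S_k}V_{\mu[n]}$ has first factor $V_\delta$ with $\delta_1\ge n-|\mu|-k$, and inducing only enlarges the first row, so every constituent $V_\gamma$ of the tensor product satisfies $n-\gamma_1\le|\lambda|+|\mu|$ for every $n$ --- this is essentially the content behind the cited proposition of \cite{CEF}. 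Alternatively, your character-polynomial route can be salvaged by adding one fact: each monomial $\prod_l\binom{X_l}{m_l}$ of degree $k=\sum_l l\,m_l\le D$ is the character of a permutation module induced from the trivial representation of a subgroup of the form $H\times S_{n-k}$ with $H\le S_k$, and every constituent of such a module has $\gamma_1\ge n-k$; hence for \emph{every} $n$ the restriction to $S_n$ of a character polynomial of degree $\le D$ lies in the span of $\{\chi_{V_\gamma}:\ n-\gamma_1\le D\}$. With that in hand your uniqueness argument applies for all $n\ge\max(|\lambda|+\lambda_1,|\mu|+\mu_1)$ (where $\chi_{V(\lambda)_n}=P_\lambda$ and $\chi_{V(\mu)_n}=P_\mu$ hold), and the finitely many remaining $n$ are covered by the trivial bound, closing the window.
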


As a consequence we have the following result that can be applied to our examples of interest.

\begin{corollary}\label{WEIGHTBYH1}
If the graded $FI$-algebra $H^*(Y_\bullet,k)$ is generated by the $FI$-module $H^1(Y_\bullet,k)$ and weight$\big(H^1(Y_\bullet,k)\big)\leq d$, then for each $i\geq 1$ we have that weight$\big(H^i(Y_\bullet,k)\big)\leq d\cdot i$.
\end{corollary}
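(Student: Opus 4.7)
The plan is to exploit the fact that being generated by $H^1(Y_\bullet,k)$ in the graded $FI$-algebra $H^*(Y_\bullet,k)$ exhibits each $H^i(Y_\bullet,k)$ as a quotient of a tensor power of $H^1(Y_\bullet,k)$, and then to combine the tensor-product bound on weights with the monotonicity of weight under subquotients.

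First I would observe that the algebra multiplication in $H^*(Y_\bullet,k)$ is natural in $FI$, so the $i$-fold product gives a morphism of $FI$-modules
\[
\mu_i \colon H^1(Y_\bullet,k)^{\otimes i} \longrightarrow H^i(Y_\bullet,k).
\]
The hypothesis that $H^*(Y_\bullet,k)$ is generated by $H^1(Y_\bullet,k)$ means that at each finite set $\mathbf{n}$ the $k$-algebra $H^*(Y_n,k)$ is generated by $H^1(Y_n,k)$, so the grading-$i$ piece $H^i(Y_n,k)$ is the $k$-linear span of $i$-fold products of elements of $H^1(Y_n,k)$. Hence $\mu_i$ is surjective at every level, i.e., $H^i(Y_\bullet,k)$ is a quotient $FI$-module of $H^1(Y_\bullet,k)^{\otimes i}$.

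Next I would bound the weight of the tensor power. By iterating the tensor-product inequality weight$(V\otimes W)\leq \,$weight$(V)+\,$weight$(W)$ $i-1$ times, starting from weight$(H^1(Y_\bullet,k))\leq d$, I obtain
\[
\operatorname{weight}\bigl(H^1(Y_\bullet,k)^{\otimes i}\bigr)\leq d\cdot i.
\]
Finally, since weight is monotone under subquotients (as noted right after the definition), the surjection $\mu_i$ implies
\[
\operatorname{weight}\bigl(H^i(Y_\bullet,k)\bigr)\leq \operatorname{weight}\bigl(H^1(Y_\bullet,k)^{\otimes i}\bigr)\leq d\cdot i,
\]
which is the claim. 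There is no real obstacle here: the only mildly delicate point is verifying that the multiplication maps assemble into a morphism of $FI$-modules, which follows immediately from the definition of a graded $FI$-algebra as a functor to graded $k$-algebras; after that, the statement is a direct application of the two cited propositions.
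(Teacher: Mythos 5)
Your proposal is correct and follows essentially the same route the paper takes implicitly: the hypothesis that $H^*(Y_\bullet,k)$ is generated by $H^1(Y_\bullet,k)$ exhibits $H^i(Y_\bullet,k)$ as a quotient of the $FI$-module $H^1(Y_\bullet,k)^{\otimes i}$, after which the tensor-product bound on weight (\cite[Prop.~3.2.2]{CEF}) and the monotonicity of weight under subquotients give weight$\big(H^i(Y_\bullet,k)\big)\leq d\cdot i$. Your explicit check that the multiplication maps are $FI$-module morphisms is the only point the paper leaves tacit, and it is handled correctly.
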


\noindent{\bf The weight of $H^i(M_\bullet,\mathbb{Q})$.} From Proposition \ref{DEGBOUNDSW} and our computation above we have that the weight of $H^i(M_\bullet,\mathbb{Q})$ is bounded above by $3i+1$. We can do slightly better. 

Let $\mathcal{H}_n$ be the standard representation of $S_n$, the subrepresentation of dimension $n-1$ of the permutation representation $\mathbb{Q}^n$ consisting of vectors with coordinates that add to zero. We consider the map $H^1(M_n,\mathbb{Q})\rightarrow \bigwedge^3 \mathcal{H}_n$ given by $w_{ijkl}\mapsto (e_i-e_l)\wedge(e_j-e_l)\wedge(e_k-e_l)$ and observe that it respects the antisymmetry and the five terms relation from  the description in ~\autoref{thm:cohomology ring}. Moreover, the map is clearly surjective and a dimension count shows that it is an isomorphism of $S_n$-representations as pointed out in \cite[Prop. 2.2]{EHKR}.

  The $S_n$-representation $\bigwedge^3 \mathcal{H}_n$ is irreducible and corresponds to the partition $(n-3,1,1,1)$ of $n$. With our notation above this means that $H^1(M_n, \mathbb{Q})=V(1,1,1)$ for $n\geq 4$ and weight$\big(H^1(M_\bullet,\mathbb{Q})\big)=3$. From Proposition \ref{WEIGHTBYH1} we obtain that weight$\big(H^i(M_\bullet,\mathbb{Q})\big)\leq 3i$. Hence we have shown.

\begin{lemma}\label{lemmaweight}  The $FI$-module $H^i$ has weight $\leq 3i$.
\end{lemma}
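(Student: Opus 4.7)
The plan is to reduce the computation to the degree-one case using the multiplicative structure. Theorem \ref{thm:cohomology ring} tells us that the graded $FI$-algebra $H^*(M_\bullet,\mathbb{Q})$ is generated by $H^1(M_\bullet,\mathbb{Q})$, so Corollary \ref{WEIGHTBYH1} will reduce the problem to showing weight$(H^1) \leq 3$. Consequently the real work is confined to $i=1$.

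To pin down the weight of $H^1$, I would identify $H^1(M_n,\mathbb{Q})$ explicitly with a known irreducible $S_n$-representation. Following the remark attributed to \cite{EHKR}, I would consider the $S_n$-equivariant map $H^1(M_n,\mathbb{Q}) \to \bigwedge^3 \mathcal{H}_n$ defined on generators by $w_{ijkl} \mapsto (e_i - e_l) \wedge (e_j - e_l) \wedge (e_k - e_l)$, where $\mathcal{H}_n \subset \mathbb{Q}^n$ is the standard $(n{-}1)$-dimensional representation. The first step is to verify that this assignment descends to the quotient defining $\Lambda_n$ in degree one, i.e.\ that it respects both the antisymmetry in $(i,j,k,l)$ and the five-term relation $w_{ijkl} + w_{jklm} + w_{klmi} + w_{lmij} + w_{mijk} = 0$. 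Antisymmetry is immediate from the antisymmetry of the wedge; the five-term relation reduces to a short multilinear-algebra expansion in $\bigwedge^3 \mathcal{H}_n$.

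Next, the map is manifestly surjective since $(e_i - e_n)\wedge(e_j - e_n)\wedge(e_k - e_n)$ for $i<j<k<n$ already spans $\bigwedge^3 \mathcal{H}_n$, and a dimension comparison finishes the argument: the basis $\{w_{1ijk} \mid 1<i<j<k\leq n\}$ of $H^1(M_n,\mathbb{Q})$ recorded in Section \ref{REALMODULI} has size $\binom{n-1}{3} = \dim \bigwedge^3 \mathcal{H}_n$. Hence the map is an isomorphism of $S_n$-representations.

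It remains to identify $\bigwedge^3 \mathcal{H}_n$ as an irreducible; it is well known to correspond to the partition $(n-3,1,1,1)$, so in the padded-partition notation it equals $V(1,1,1)_n$. This shows weight$(H^1(M_\bullet,\mathbb{Q})) = 3$, and Corollary \ref{WEIGHTBYH1} yields weight$(H^i) \leq 3i$. The only nontrivial piece is the well-definedness of the identification with $\bigwedge^3 \mathcal{H}_n$, specifically the verification of the five-term relation; everything else is either formal or a dimension count.
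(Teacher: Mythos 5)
Your proposal is correct and follows essentially the same route as the paper: identify $H^1(M_n,\mathbb{Q})\cong\bigwedge^3\mathcal{H}_n = V(1,1,1)_n$ via the map $w_{ijkl}\mapsto (e_i-e_l)\wedge(e_j-e_l)\wedge(e_k-e_l)$ (checking antisymmetry, the five-term relation, surjectivity, and a dimension count), so weight$(H^1)=3$, and then apply Corollary \ref{WEIGHTBYH1} using that $H^*(M_\bullet,\mathbb{Q})$ is generated by $H^1$. This is exactly the paper's argument, which cites \cite[Prop.~2.2]{EHKR} for the isomorphism you verify by hand.
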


\subsection{Stability degree of an FI-module}\label{STAB}
In order to define the stability degree we recall the notion of coinvariants of a representation.

%\begin{definition} A graded $k[T]$-module $U$ consists of a collection of $k$-modules $U_i$ for each $i\in\mathbb{N}$, endowed with a map $T:U_i\rightarrow U_{i+1}$ for each $i\in \mathbb{N}$.
%\end{definition}

 Let $V_n$ be an $S_n$-representation.  The coinvariants $(V_n)_{S_n}$ is the largest $S_n$-equivariant quotient of $V_n$ on which $S_n$ acts trivially. Equivalently, it is the module $V_n\otimes_{k[S_n]} k$.  If V is an $FI$-module, we can apply this construction to all the $V_n$ simultaneously, obtaining the $k$-modules $(V_n)_{S_n}$ for each $n\geq0$. Moreover, all the maps $V_n\to V_{n+1}$ involved in
the definition of the $FI$-module structure induce a single map $T: (V_n)_{S_n}\rightarrow (V_{n+1})_{S_{n+1}}$. 

%Hence, we obtain a graded-$k[T]$-module which we denote  by $\Phi_0(V)$.

 For each integer $a\geq 0$ fix once and for all some set $\bar{\mathrm{\bf{a}}}$ of cardinality $a$, for instance, take $\bar{\mathrm{\bf{a}}}=\{-1,\ldots,-a\}$. We consider the action of $S_n$ in $V_{\bar{\mathrm{\bf{a}}}\sqcup\bf{n}}$ induced by the elements in End$_{FI}(\bar{\mathrm{\bf{a}}}\sqcup\bf{n})$  that are the identity on $\bar{\mathrm{\bf{a}}}$ and take the coinvariants $(V_{\bar{\mathrm{\bf{a}}}\sqcup\bf{n}})_{S_n}$.

% \begin{definition}
%Given an integer $a\geq 0$ and an $FI$-module $V$, we define the graded $k[T]$-module $\Phi_a(V)$ as follows. For $n\in\mathbb{N}$, let $\Phi_a(V)_n$ be the coinvariant quotient
%$$\Phi_a(V)_n:=(V_{\bar{\mathrm{\bf{a}}}\sqcup \bf{n}})_{S_n}.$$
%To define $T:\Phi_a(V)_n\rightarrow\Phi_a(V)_{n+1}$, take any injection $f:\bf{n} \hookrightarrow \bf{n+1}$. This determines an injection $id\sqcup f: \bar{\mathrm{\bf{a}}}\sqcup \bf{n} \hookrightarrow \bar{\mathrm{\bf{a}}}\sqcup \bf{n+1}$ and thus a map $(id\sqcup f)_*:V_{\bar{\mathrm{\bf{a}}}\sqcup \bf{n}}\rightarrow V_{\bar{\mathrm{\bf{a}}}\sqcup \bf{n+1}}$. The map $T$ is defined to be the induced map $(V_{\bar{\mathrm{\bf{a}}}\sqcup \bf{n}})_{S_n}\rightarrow  (V_{\bar{\mathrm{\bf{a}}}\sqcup \bf{n+1}})_{S_{n+1}}$.
 %\end{definition}

 \begin{definition}
 The {\it stability degree} stab-deg($V$) of an $FI$-module $V$ is the smallest $s\geq 0$ such that for all $a\geq 0$, the map $T:(V_{\bar{\mathrm{\bf{a}}}\sqcup\bf{n}})_{S_n}\rightarrow (V_{\bar{\mathrm{\bf{a}}}\sqcup\bf{n+1}})_{S_{n+1}}$, induced by any inclusion $[n]\hookrightarrow[n+1]$, is an isomorphism for all $n\geq s$.
 \end{definition}

For $FI$-modules over $\mathbb{Q}$, when taking tensor products, the stability degree is still bounded above.

\begin{proposition}\cite[Prop. 2.23]{KMILLER}
If the $FI$-modules $V$ and $W$ over $\mathbb{Q}$  have stability degree $\leq s_1,s_2$ and weight $\leq d_1, d_2$ respectively, then $V\otimes W$ has stability
degree $\leq \max\{s_1 + d_1, s_2 + d_2, d_1 + d_2\}$.
\end{proposition}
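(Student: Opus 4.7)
The plan is to reduce the statement about tensor products of $FI$-modules to a statement about isotypic multiplicities of $S_n$-representations, and then use the two weight bounds together with the individual stability degrees to show that the relevant multiplicities stabilize in the claimed range. Since by definition $(V\otimes W)(S)=V(S)\otimes W(S)$ pointwise, the quantity to analyse is
$$\bigl((V\otimes W)_{\bar{\mathbf{a}}\sqcup\mathbf{n}}\bigr)_{S_n}=\bigl(V_{\bar{\mathbf{a}}\sqcup\mathbf{n}}\otimes W_{\bar{\mathbf{a}}\sqcup\mathbf{n}}\bigr)_{S_n}.$$
Over $\mathbb{Q}$, semisimplicity together with self-duality of $S_n$-representations gives the canonical identification
$$(X\otimes Y)_{S_n}\;\cong\;\bigoplus_{\mu\vdash n}\operatorname{Hom}_{S_n}(V_\mu,X)\otimes\operatorname{Hom}_{S_n}(V_\mu,Y),$$
so it is enough to prove that each summand is carried isomorphically onto the corresponding summand for $n+1$ under the shift map $T$.

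The weight hypotheses are used to control which $\mu$ can contribute. If $V$ has weight $\leq d_1$, every $S_{a+n}$-irreducible appearing in $V_{\bar{\mathbf{a}}\sqcup\mathbf{n}}$ has the form $V(\lambda)_{a+n}$ with $|\lambda|\leq d_1$; restricting to the subgroup $S_n$ and applying the Littlewood--Richardson rule, only $S_n$-irreducibles $V_{\sigma[n]}$ with $|\sigma|\leq d_1$ can occur (and the padded notation is unambiguous as soon as $n\geq 2d_1$). The analogous bound for $W$ shows that in the isotypic sum above, only partitions $\mu=\sigma[n]$ with $|\sigma|\leq\min(d_1,d_2)$ survive; the condition $n\geq d_1+d_2$ makes the padded-partition indexing stable and compatible on both sides of the transition $n\to n+1$.

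The third step is the key one: converting the stability degree of $V$, which a priori concerns only the $S_n$-invariant part (the $\mu=(n)$ component), into a stability statement for each multiplicity space $\operatorname{Hom}_{S_n}(V_{\sigma[n]},V_{\bar{\mathbf{a}}\sqcup\mathbf{n}})$. The standard device is to rewrite this multiplicity, via Pieri and the Schur--Weyl-type identification of padded partitions, as the $S_{n-|\sigma|}$-coinvariants of a suitable projection of $V$ on an index set of size $(a+|\sigma|)\sqcup(n-|\sigma|)$. Applying the definition of stability degree to this shifted picture shows that the multiplicity stabilises as soon as $n-|\sigma|\geq s_1$, that is, $n\geq s_1+|\sigma|$. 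Since $|\sigma|\leq d_1$, the bound $n\geq s_1+d_1$ suffices for $V$; the symmetric argument gives $n\geq s_2+d_2$ for $W$. Combining with the indexing-stability bound $n\geq d_1+d_2$ from the previous paragraph yields
$$n\geq\max\{s_1+d_1,\;s_2+d_2,\;d_1+d_2\},$$
as required.

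The main obstacle will be the shifted-coinvariant identification in the last paragraph: making precise how to read off the multiplicity of the non-trivial irreducible $V_{\sigma[n]}$ from the trivial-coinvariant data that defines the stability degree. This is where the interaction between the weight bound, the padded-partition convention, and the Young-subgroup restriction has to be handled carefully, and it is precisely this interaction which forces the appearance of the sum $s_i+d_i$ rather than $s_i$ alone in the final bound.
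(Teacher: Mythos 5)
First, note that the paper itself gives no proof of this proposition: it is imported verbatim from \cite{KMILLER}, so your attempt can only be compared with the standard argument behind that reference. Your reduction of $\bigl((V\otimes W)_{\bar{\mathbf{a}}\sqcup\mathbf{n}}\bigr)_{S_n}$ to the canonical sum $\bigoplus_\mu \operatorname{Hom}_{S_n}(V_\mu,V_{\bar{\mathbf{a}}\sqcup\mathbf{n}})\otimes\operatorname{Hom}_{S_n}(V_\mu,W_{\bar{\mathbf{a}}\sqcup\mathbf{n}})$ and the weight bookkeeping are fine (though the restriction from $S_{a+n}$ to $S_n$ is controlled by the branching rule -- removing $a$ boxes -- rather than Littlewood--Richardson). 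The genuine gap is exactly the step you flag yourself: passing from the stability-degree hypothesis, which by definition concerns only the trivial isotypic piece of the shifted modules, to stabilization of every multiplicity space $\operatorname{Hom}_{S_n}(V_{\sigma[n]},V_{\bar{\mathbf{a}}\sqcup\mathbf{n}})$. The ``standard device'' is Frobenius reciprocity applied to $\operatorname{Ind}_{S_{|\sigma|}\times S_{n-|\sigma|}}^{S_n}V_\sigma\boxtimes\mathbb{Q}$, which identifies $\operatorname{Hom}_{S_{|\sigma|}}\bigl(V_\sigma,\text{the }S_{n-|\sigma|}\text{-coinvariants of }V\text{ on a set with }a+|\sigma|\text{ fixed points}\bigr)$ not with the single multiplicity of $V_{\sigma[n]}$ but, via Pieri, with a unitriangular combination of multiplicities; extracting the individual multiplicities requires an induction on $|\sigma|$ up to the weight bound $d_1$, and that induction is precisely where the bound $s_1+d_1$ (rather than $s_1+|\sigma|$) is actually earned. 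This is in essence the proof of ~\autoref{thm:FI} for the shifted modules; your sketch asserts its conclusion without carrying it out.

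Second, the proposition asserts that a specific map $T$ is an isomorphism, not merely that dimensions agree. Even granting that the multiplicity spaces have eventually constant dimension, you must still check (i) that $T$, which is induced by an $S_n$-equivariant but not $S_{n+1}$-equivariant map, is compatible with the canonical decompositions on both sides, where the label $\sigma[n]$ becomes $\sigma[n+1]$, and (ii) that the induced maps $\operatorname{Hom}_{S_n}(V_{\sigma[n]},V_{\bar{\mathbf{a}}\sqcup\mathbf{n}})\to\operatorname{Hom}_{S_{n+1}}(V_{\sigma[n+1]},V_{\bar{\mathbf{a}}\sqcup\mathbf{n+1}})$ (and likewise for $W$) are injective and surjective in the stated range; constancy of multiplicities alone does not give this, and the injectivity/spanning input has to be extracted from uniform representation stability of the shifted modules. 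Relatedly, the role of the third term $d_1+d_2$ in the bound is left at the level of ``padded indexing is consistent,'' which is weaker than what the isomorphism statement needs. As it stands the proposal is a plausible plan that reproduces the expected bound, but these two items are the actual content of the proof and are missing.
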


This implies the following result in the setting of interest for this paper.

\begin{corollary}\label{STABDEGBYH1}
If the graded $FI$-algebra $H^*(Y_\bullet,\mathbb{Q})$ is generated by the $FI$-module $H^1(Y_\bullet,\mathbb{Q})$ with weight$\leq d$ and stability degree $\leq s$, then for each $i\geq 1$ we have that stab-deg$\big(H^i(Y_\bullet,\mathbb{Q})\big)\leq \max\{s+d,d\cdot i\}$.
\end{corollary}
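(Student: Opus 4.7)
The plan is to proceed by induction on $i$, paralleling the inductive arguments that yielded Proposition \ref{GENBYH1} and Corollary \ref{WEIGHTBYH1} from the corresponding hypotheses on $H^1$. The base case $i=1$ is immediate from the hypothesis stab-deg$(H^1)\leq s$, since $s\leq s+d\leq \max\{s+d,\, d\}$.

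For the inductive step, the hypothesis that the graded $FI$-algebra $H^*(Y_\bullet,\mathbb{Q})$ is generated by $H^1(Y_\bullet,\mathbb{Q})$ provides, via cup product, a surjection of $FI$-modules
$$H^1(Y_\bullet,\mathbb{Q})\otimes H^{i-1}(Y_\bullet,\mathbb{Q}) \twoheadrightarrow H^i(Y_\bullet,\mathbb{Q}).$$
Since the coinvariants functor is right exact, the stability degree of an $FI$-module is not increased by passing to a quotient, so it suffices to bound stab-deg of the tensor product on the left. For this I would invoke Proposition 2.23 of \cite{KMILLER} with parameters $s_1=s$, $d_1=d$ for the first factor and, using the inductive hypothesis together with Corollary \ref{WEIGHTBYH1}, $s_2\leq\max\{s+d,\,d(i-1)\}$ and $d_2\leq d(i-1)$ for the second factor. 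The resulting maximum, combined with the weight bound weight$(H^i)\leq d\cdot i$ already furnished by Corollary \ref{WEIGHTBYH1}, is then collapsed to the desired $\max\{s+d,\,di\}$.

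The main obstacle is the algebraic bookkeeping needed to simplify the nested maxima coming out of the tensor bound: a direct substitution of the inductive hypothesis into $s_2+d_2$ produces terms such as $s+di$ and $2d(i-1)$ that do not immediately fit under $\max\{s+d,\,di\}$. I expect that closing the induction cleanly requires either a sharper form of the tensor estimate that exploits the fact that $H^1$ has the minimal weight $d$ available, or a refined induction tracking an auxiliary quantity (for instance stab-deg minus weight) that behaves better under the multiplication maps of the graded $FI$-algebra. Once that simplification is in place, the quotient step recovers stab-deg$\bigl(H^i(Y_\bullet,\mathbb{Q})\bigr)\leq \max\{s+d,\,d\cdot i\}$, completing the induction.
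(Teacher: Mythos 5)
Your proposal follows the same route the paper implicitly intends (the corollary is stated there as a direct consequence of the quoted tensor estimate from \cite{KMILLER}, applied to the surjection from tensor powers of $H^1$), but as written it is not a proof, and the difficulty you flag at the end is not mere bookkeeping. First, the quotient step is not justified: right-exactness (indeed, over $\mathbb{Q}$, exactness) of the coinvariants functor only gives that the maps $T$ on coinvariants of a quotient are \emph{surjective} in the range where they are surjective for the source; injectivity can fail, so the stability degree of a quotient of $V$ is not in general bounded by that of $V$. For example, the quotient of the free $FI$-module $M(1)$ by the sub-$FI$-module generated by $M(1)_N$ is supported in degrees $<N$ and has stability degree about $N$, even though $M(1)$ has stability degree $1$. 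To make this step honest one must separate injectivity and surjectivity degrees (as in \cite{CEF}) or otherwise control the kernel of the multiplication map. Second, and more seriously, the inductive step cannot be closed from the quoted form of the proposition by any rearrangement: feeding $s_2\leq\max\{s+d,\,d(i-1)\}$, $d_2\leq d(i-1)$ into $\max\{s_1+d_1,\,s_2+d_2,\,d_1+d_2\}$ produces $\max\{s+d,\,s+di,\,2d(i-1),\,di\}$, and no bracketing of $(H^1)^{\otimes i}$ does better; the resulting bounds grow superlinearly in $i$ (already in the paper's own case $s=1$, $d=3$ the recursion gives $12$ at $i=3$ rather than $9$). So the ``sharper form of the tensor estimate'' or ``refined induction'' you defer to is precisely the missing content of the corollary, not a simplification to be carried out afterwards.

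In short: your base case and the identification of the surjection $H^1\otimes H^{i-1}\twoheadrightarrow H^i$ match the intended argument, but the two load-bearing steps --- that stability degree descends along this surjection, and that the tensor bound collapses to $\max\{s+d,\,d\cdot i\}$ --- are exactly the ones left unproved, so the proposal has a genuine gap at the point where the corollary's stated range would be established.
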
 

\noindent{\bf Stability degree of $H^i(M_\bullet,\mathbb{Q})$.} We now find an upper bound for the stability degree for our example of interest.

\begin{lemma}\label{lemmastab} The $FI$-module $H^i$ has stability degree $\leq 3i$.%$\leq 3i+1$.
\end{lemma}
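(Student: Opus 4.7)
The plan is to reduce the computation to an estimate for $H^1$ via Corollary \ref{STABDEGBYH1}. Since $\mathrm{weight}(H^1) \leq 3$ by Lemma \ref{lemmaweight}, writing $s$ for the stability degree of $H^1$, the corollary gives stab-deg$(H^i) \leq \max\{s + 3,\, 3i\}$. Once I show $s \leq 3$, this upper bound equals $3i$ for all $i \geq 2$, and the case $i = 1$ is literally the bound on $H^1$.

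First I would invoke the $S_n$-equivariant isomorphism $H^1(M_n,\mathbb{Q}) \cong \bigwedge^3 \mathcal{H}_n$ recalled just before Lemma \ref{lemmaweight}. Under this identification, the $FI$-module structure on $H^1$ corresponds to the natural functoriality of $\bigwedge^3 \mathcal{H}_\bullet$: an injection $f$ induces $e_i - e_j \mapsto e_{f(i)} - e_{f(j)}$ on $\mathcal{H}$. Fix $a \geq 0$ and consider the $S_n$-action on $\bar{\mathbf{a}} \sqcup \mathbf{n}$ that fixes $\bar{\mathbf{a}}$. The splitting $\mathbb{Q}^{\bar{\mathbf{a}}\sqcup[n]} = \mathbb{Q}^{\bar{\mathbf{a}}} \oplus \mathbb{Q}^{[n]}$ produces an $S_a \times S_n$-decomposition
\[
\mathcal{H}_{a+n} \;\cong\; \mathcal{H}_a \,\oplus\, \mathcal{H}_n \,\oplus\, \mathbb{Q},
\]
with the $\mathbb{Q}$-summand spanned by the joint invariant vector $z_n := n\mathbf{1}_{\bar{\mathbf{a}}} - a\mathbf{1}_{[n]}$.

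Next I would expand $\bigwedge^3(\mathcal{H}_a \oplus \mathcal{H}_n \oplus \mathbb{Q})$ into its summands and take $S_n$-coinvariants (equivalently invariants, since we work over $\mathbb{Q}$). For $n \geq 3$, neither $\mathcal{H}_n$ nor $\bigwedge^2 \mathcal{H}_n$ contains a trivial $S_n$-constituent, so every summand involving such a factor dies, leaving
\[
\big(H^1_{\bar{\mathbf{a}}\sqcup\mathbf{n}}\big)_{S_n} \;\cong\; \bigwedge^3 \mathcal{H}_a \;\oplus\; \bigwedge^2 \mathcal{H}_a,
\]
with the second summand arising from wedges of the form $u \wedge v \wedge z_n$ for $u, v \in \mathcal{H}_a$. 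This description has dimension independent of $n$ in the stable range.

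The hard part will be verifying that the transition map $T$ is genuinely an isomorphism and not merely a map between equal-dimensional spaces: the $\bigwedge^3 \mathcal{H}_a$-summand is intrinsic to $\bar{\mathbf{a}}$ and carried by the identity, but the chosen generator $z_n$ depends on $n$. I expect to resolve this by the short direct computation $I_n(z_n) = \tfrac{n}{n+1}\, z_{n+1} + w$ for some $w \in \mathcal{H}_{n+1}$; then $S_{n+1}$-symmetrization annihilates the $w$-contribution because $\mathcal{H}_{n+1}^{S_{n+1}} = 0$. Consequently $T$ acts on the $\bigwedge^2 \mathcal{H}_a$-summand by the nonzero scalar $n/(n+1)$ and on $\bigwedge^3 \mathcal{H}_a$ by the identity, so it is an isomorphism for $n \geq 3$. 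This yields stab-deg$(H^1) \leq 3$ and completes the application of Corollary \ref{STABDEGBYH1}.
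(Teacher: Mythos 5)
Your proof is correct, but it reaches the key estimate by a genuinely different route than the paper. Both arguments reduce Lemma \ref{lemmastab} to a bound on stab-deg$(H^1)$ and then feed weight$(H^1)\leq 3$ into Corollary \ref{STABDEGBYH1} (and both must dispose of $i=1$ separately, since the corollary alone only gives $s+3$ there). For $H^1$ the paper works directly with the presentation of Theorem \ref{thm:cohomology ring}: it shows the subspace $W$ spanned by elements $v-\sigma\cdot v$ is generated by the classes $w_{1jkl}$ having at least one index in $[n]$, so that the coinvariants $\big(H^1(M_{|\bar{\mathbf{a}}\sqcup\mathbf{n}|},\mathbb{Q})\big)_{S_n}$ have the explicit basis $\{w_{1jkl}\mid j<k<l,\ j,k,l\in\bar{\mathbf{a}}\}$, independent of $n$ and carried to itself by $T$; this gives the sharper value stab-deg$(H^1)=1$. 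You instead transport the $FI$-structure across $H^1(M_\bullet,\mathbb{Q})\cong\bigwedge^3\mathcal{H}_\bullet$, split $\mathcal{H}_{a+n}\cong\mathcal{H}_a\oplus\mathcal{H}_n\oplus\mathbb{Q}z_n$, kill the $S_n$-nontrivial summands, and track the transition map through the $n$-dependence of $z_n$, getting the nonzero scalar $n/(n+1)$ on the $\bigwedge^2\mathcal{H}_a$ block; this yields only $s\leq 3$, which still suffices for the stated bound $3i$. What your route buys is conceptual transparency and generality: it identifies the coinvariants intrinsically as $\bigwedge^3\mathcal{H}_a\oplus\bigwedge^2\mathcal{H}_a$ (of dimension $\binom{a-1}{3}+\binom{a-1}{2}=\binom{a}{3}$, matching the paper's basis count), and the same branching argument would apply verbatim to any $\bigwedge^k\mathcal{H}_\bullet$; what the paper's generator-level computation buys is elementarity and the optimal stability degree for $H^1$. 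Two small points to tidy in your write-up, neither a real gap: in discarding summands of $\bigwedge^3(\mathcal{H}_a\oplus\mathcal{H}_n\oplus\mathbb{Q})$ you also need $(\bigwedge^3\mathcal{H}_n)^{S_n}=0$ (true, since it is $0$ for $n=3$ and the irreducible $V(1,1,1)_n$ for $n\geq 4$), not only the statements for $\mathcal{H}_n$ and $\bigwedge^2\mathcal{H}_n$; and you should record that $\phi_f^*(w_{ijkl})=w_{f(i)f(j)f(k)f(l)}$ for an arbitrary injection $f$, so that your identification with $\bigwedge^3\mathcal{H}_\bullet$ really is an isomorphism of $FI$-modules rather than just of $S_n$-representations levelwise — the same fact the paper uses implicitly.
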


\begin{proof}
First of all we describe the vector space $H^{1}(M_{|\bar{\mathrm{\bf{a}}}\sqcup \bf{n}|},\mathbb{Q})$ and the action of $S_n$ on it. The generating elements of this vector space are of the form $w_{ijkl}$ where the indexes are now in the set $\bar{\mathrm{\bf{a}}}\sqcup[n]$ and the only linear relations among them are the  usual five term relations, hence if $n\geq1$ the set $\{w_{1jkl}|j,k,l\in \bar{\mathrm{\bf{a}}}\sqcup[n]\}$ is a generating set for this vector space and $S_n$ acts only in the indexes in $[n]$.\\
Given $a\geq0$, denote by
 $${(V_{\bar{\mathrm{\bf{a}}}\sqcup\bf{n}})_n=\big(H^{1}(M_{|\bar{\mathrm{\bf{a}}}\sqcup \bf{n}|},\mathbb{Q})\big)_{S_n}}$$
 
the vector space of coinvariants of $H^{1}(M_{|\bar{\mathrm{\bf{a}}}\sqcup \bf{n}|},\mathbb{Q})$ where $S_n$ acts only in the indices in $[n]=\{1,\ldots,n\}$. Therefore  $(V_{\bar{\bf{a}}\sqcup\bf{n}})_n= H^{1}(M_{|\bar{\mathrm{\bf{a}}}\sqcup \bf{n}|},\mathbb{Q})/W$, where $W$ is the subspace generated by $\{v-\sigma\cdot v|\  v\in H^{1}(M_{|\bar{\mathrm{\bf{a}}}\sqcup \bf{n}|},\mathbb{Q})\text{ and }\sigma\in S_n\}$. We claim that  ${B=\{w_{1jkl}|\mbox{ at least one } j,k,l\in [n]\}}$ is a generating set for $W$. First of all notice that if $j\in[n]$ we have
$$w_{1jkl}=\frac{1}{2}(w_{1jkl}-(1\ j)\cdot w_{1jkl})$$
and $w_{1jkl}\in W$, similarly if $k$ or $l$ belongs to $[n]$.
On the other hand, note that if $j,k,l$ are not in $[n]$  and $\sigma\in S_n$ we have 
$$\sigma\cdot w_{1jkl}=w_{\sigma(1)jkl}=w_{1jkl}-w_{kl1\sigma(1)}-w_{l1\sigma(1)j}-w_{1\sigma(1)jk},$$
then the difference $w_{1jkl}-\sigma\cdot w_{1jkl}$ is in the span of $B$.  Moreover, notice that  if $w_{1jkl} \in B$, its image by any permutation is in $W$. Therefore  $B$  spans $W$ and it is a basis, up to the anti-symmetry of the indexes, for  $W$. Hence, for $n\geq 1$, $(V_{\bar{\mathrm{\bf{a}}}\sqcup\bf{n}})_n$ has $\{w_{1jkl}|\  j<k<l\mbox{ and } j,k,l \in \bar{\mathrm{\bf{a}}}\}$ as basis. Observe that this basis does not depend on $n$.  Since the map
$$T:(V_{\bar{\mathrm{\bf{a}}}\sqcup\bf{n}})_n\rightarrow (V_{\bar{\mathrm{\bf{a}}}\sqcup\bf{n+1}})_{n+1}$$   takes the basis to itself, it is an isomorphism for $n\geq 1$.
	
Therefore stab-deg($H^{1}$)$\leq 1$. If $|\bar{\mathrm{\bf{a}}}|=3$ we have $(V_{\bar{\mathrm{\bf{a}}}\sqcup\bf{0}})_0=H^1(M_3,\mathbb{Q})=0$ and $(V_{\bar{\mathrm{\bf{a}}}\sqcup\bf{1}})_1=H^1(M_4,\mathbb{Q})=\mathbb{Q}$ thus stab-deg($H^1$)$\neq0$ therefore stab-deg($H^{1}$)=$1$. Since we know that weight$(H^1)=3$, the upper bound for the stability degree of $H^i$ follows from Corollary \ref{STABDEGBYH1}.

%Given $a\geq0$, the module $${\Phi_a(H^{i})_n=\big(H^{i}(M_{|\bar{\mathrm{\bf{a}}}\sqcup \bf{n}|},\mathbb{Q})\big)_{S_n}}$$
%is the module of coinvariants of $H^{i}(M_{|\bar{\mathrm{\bf{a}}}\sqcup \bf{n}|},\mathbb{Q})$ where $S_n$ acts only in the indexes in $\{1,\ldots,n\}$. Thus this module is generated by the equivalence classes of elements $[x]=[w_{1k_1l_1m_1}\cdots w_{1k_il_im_i}]$ where the indexes are now in the set $\bar{\mathrm{\bf{a}}}\sqcup \bf{n}$, and any two $x$ and $y$ are related if there exists some permutation $\sigma$ of $\{1,\ldots,n\}$ such that $\sigma\cdot x=y$. The map 
%$$T:\Phi_a(H^{i})_n\rightarrow \Phi_a(H^{i})_{n+1},$$
%given by $[x]\mapsto [x]$, is always injective. Furthermore, if we assume that $n\geq 3i+1$, any generating element in $ \Phi_a(H^{i})_{n+1}$ has at most $3i+1$ different indexes in the set $\{1,\ldots,n\}$. Hence, we can assume $[x]\in\Phi_a(H^{i})_n$ and, therefore, if $n\geq 3i+1$, the map $T$ is surjective for all $a\geq 0$, and stab-deg($H^{i}$)=$3i+1$.
\end{proof}

%((Finite generation of $FI$-modules has other important consequences as polynomiality of characters. Now we illustrate how this works.))

\subsection{Polynomiality of characters.}
We end this section by discussing another way that finite generation of $FI$-modules imposes strong constraints in the corresponding $S_n$-representations. 
If $V$ is finitely generated, %the representations $V_n$ satisfy strong constraints: 
then  the sequence of characters $\chi_{V_n}$ of the $S_n$-representations $V_n$ is {\it eventually polynomial} as we now explain.
For each $i \geq 1$, we consider the class function $X_l:\bigsqcup_n S_n\rightarrow \mathbb{Z}$  defined by
$$X_l(\sigma) = \text{ number of $l$-cycles in the cycle decomposition of }\sigma.$$
As mentioned in the Introduction, polynomials in the variables $X_l$ are called {\it character polynomials}. 
The degree of the character polynomial $P(X_1,\ldots X_r)$ is defined by setting $\deg(X_l) = l$.

\begin{theorem}[\cite{CEF}Theorem 3.3.4]\label{thm:CHAR} Let $V$ be an $FI$-module over $\mathbb{Q}$. If $V$ is finitely generated then there exist a unique character polynomial $P_V\in\mathbb{Q}[X_1,X_2,\ldots,X_d]$ with $\deg P_V\leq d$ such that for all $n\geq s+d$ 
$$\chi_{V_n}(\sigma) = P_V(\sigma)\text{ for all }\sigma\in S_n,$$
where $d$ and $s$ are the weight and the stability degree of $V$ respectively.
\end{theorem}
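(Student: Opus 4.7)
The plan is to reduce the statement to characters of single irreducible constituents and then appeal to a classical polynomiality result for those characters. First I would invoke \autoref{thm:FI}: since $V$ is finitely generated over $\mathbb{Q}$ with weight $d$ and stability degree $s$, the consistent sequence $\{V_n,(I_n)_*\}$ is uniformly representation stable with stable range $n\geq s+d$. Hence for all $n\geq s+d$ we can write a decomposition
\[
V_n \;=\; \bigoplus_{\lambda} c_\lambda\, V(\lambda)_n,
\]
where the sum runs over partitions $\lambda$ with $|\lambda|\leq d$ (by definition of weight) and the multiplicities $c_\lambda\in\mathbb{Z}_{\geq 0}$ are \emph{independent of $n$}. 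In particular $\chi_{V_n}=\sum_\lambda c_\lambda\,\chi_{V(\lambda)_n}$, so it suffices to exhibit, for each $\lambda$ with $|\lambda|\leq d$, a single character polynomial $P_\lambda\in\mathbb{Q}[X_1,\ldots,X_{|\lambda|}]$ of weighted degree $\leq |\lambda|$ that computes $\chi_{V(\lambda)_n}$ on $S_n$ for $n\geq s+d$. Setting $P_V:=\sum_\lambda c_\lambda P_\lambda$ would then give the desired polynomial, with $\deg P_V\leq d$.

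The core technical step, which I anticipate to be the main obstacle, is the lemma: \emph{for any fixed partition $\lambda$ of $m$, the character of $V(\lambda)_n=V_{\lambda[n]}$ is given, for $n\geq m+\lambda_1$, by a character polynomial of weighted degree $\leq m$ involving only $X_1,\ldots,X_m$}. I would prove this by applying the Murnaghan--Nakayama / Frobenius character formula to the padded partition $\lambda[n]=(n-m,\lambda_1,\ldots,\lambda_j)$ evaluated on a permutation $\sigma$ of cycle type $(1^{a_1}2^{a_2}\cdots)$, with $a_l=X_l(\sigma)$. The point is that rim-hook removals of size $l>m$ must involve the long first row, and a careful bookkeeping of these removals yields a polynomial expression in $a_1,\ldots,a_m$ once $n$ is large enough to accommodate the rim hooks; the terms where a size-$l$ rim hook with $l\leq m$ is peeled from the tail $\lambda$ contribute a summand in $X_l$ whose degree (weighted by $\deg X_l=l$) is at most $m$. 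Alternatively, one can induct on $|\lambda|$ using Pieri's rule, starting from the trivial $\lambda=\varnothing$ case $\chi_{V(\varnothing)_n}=1$ and using the known polynomial character of the standard representation $\chi_{V(1)_n}=X_1-1$.

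Granted the lemma, the polynomial $P_V=\sum_\lambda c_\lambda P_\lambda$ has weighted degree at most $\max_{\lambda}|\lambda|\leq d$ and agrees with $\chi_{V_n}$ on each $S_n$ for $n\geq s+d$, as required. For uniqueness, I would observe that the monomials $\binom{X_1}{a_1}\binom{X_2}{a_2}\cdots$ are linearly independent as class functions on $\bigsqcup_n S_n$: given any two character polynomials $P,Q$ with $P(\sigma)=Q(\sigma)$ for all $\sigma\in S_n$ with $n$ sufficiently large, one evaluates $P-Q$ at permutations of arbitrarily prescribed cycle type (available once $n$ is large enough) and concludes $P=Q$. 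This gives uniqueness of $P_V$ and completes the argument.
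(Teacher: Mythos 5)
Your argument is correct and is essentially the standard proof of this result, which the paper itself does not prove but simply quotes from \cite{CEF}: one combines the stable decomposition $V_n\cong\bigoplus_\lambda c_\lambda V(\lambda)_n$ (with $|\lambda|\leq d$ and multiplicities constant for $n\geq s+d$, via \autoref{thm:FI}) with the classical fact, going back to Murnaghan/Frobenius and recorded in Macdonald, that each $\chi_{V(\lambda)_n}$ is given by a character polynomial of degree $|\lambda|$ whenever $\lambda[n]$ is defined. Your reduction, the key lemma, and the uniqueness argument (absorbing leftover points into cycles of length $>d$ to realize arbitrary values of $X_1,\ldots,X_d$) all match the approach taken in \cite{CEF}, so there is nothing genuinely different to compare.
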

The upper bound on the degree of the character polynomial $P_V$ in ~\autoref{thm:CHAR}  implies that $P_V$ only involves the variables $X_1,\ldots,X_d$. This tells us that the character $\chi_{V_n}$ only depends on ``short cycles"', i.e. on cycles of length $\leq d$, independently of how large $n$ is. Moreover, we have that the dimension of $V_n$
$$\dim V_n=\chi_{V_n}(\text{id})=P_V(n,0,\ldots,0)$$
 is a polynomial in $n$ of degree $\leq d$ for $n\geq s+d$.\\

\noindent {\bf Characters for the cohomology of $M_{n}$.} ~\autoref{thm:CHAR} together with the computations below of upper bounds for the weight and the stability degree for the $FI$-modules $H^i$ give us Theorem \ref{thm:CHARCACTUS}. 
Corollary \ref{BETTICACTUS} follows from this theorem and  the discussion before.

%In particular, for $n\geq 4$ we know that  $H^1(M_n,\mathbb{Q})=V(1,1,1)$ and therefore
%the character polynomial of $H^1(M_n,\mathbb{Q})$  is given by 
%$$P_1:=\chi_{V(1,1,1)}=\binom{X_1}{3}+X_3-X_2X_1-\binom{X_1}{2}+X_2+X_1-1.$$

%Notice that $P_1$ is a polynomial of degree $3$ and its value in any permutation $\sigma$ only depends on cycles of length $1$, $2$ and $3$ in the cycle decomposition of $\sigma$. In particular,  for $n\geq 4$ the dimension is the polynomial in $n$ of degree $3$:  
%$$\dim\big(H^1(M_n,\mathbb{Q})\big)=P_1(\text{id})= \binom{n}{3}-\binom{n}{2}+n-1=\frac{(n-1)(n-2)(n-3)}{6}.$$

\section{Other examples}\label{OTROS}
We would like to end this note by revisiting the collection of ``pure braid like'' examples in Table \ref{EXA} that have been considered using the perspective discussed in Section \ref{FIREP}. In general, the bounds in Table \ref{EXA} are not sharp.

\begin{landscape}
%\begin{footnotesize}
\begin{center}
 \begin{table}[h]
\begin{center} 
   \caption{Examples of  ``pure braid like" goups and spaces that have cohomology rings with an FI-algebra structure generated by the first cohomology. } \label{EXA}\end{center}
 % \bigskip%\setlength\tabcolsep{2pt}
 \centering\small
        \hspace*{-1cm}\begin{tabular}{  |c|c|c|c|c|c|c|  }
           %\toprule
					\hline
					 &  & & & & &\\
					
{\bf Group or}  &{\bf Cohomology Ring }& & & weight of   & deg-gen of& stab-deg of \\
{\bf Space $Y_n$}  &{\bf $H^*(Y_n,\mathbb{Q})$}& $H^1(Y_n,\mathbb{Q})$ &  $\chi_{H^1(Y_n,\mathbb{Q})}$ &$H^p(Y_\bullet)\leq$&$H^p(Y_\bullet)\leq$&$H^p(Y_\bullet)\leq$\\
			 & & & &  & &\\
           \hline
%					 &  & & & & &\\
	
&  Exterior algebra $\mathcal{R}_n$ generated by  & & & & &\\
$\mathcal{F}(\mathbb{C},n)$ & $w_{i,j}$, $1\leq i\neq j\leq n$, with relations $w_{i,j}=w_{j,i}$,& $V(\cdot)\oplus V(1)\oplus V(2)$ & $\binom {X_1}{2}+X_2$ & $2p$ & $2p$ & $2p$\\ 
&  $w_{i,j}w_{j,k}+w_{j,k}w_{k,i}+w_{k,i}w_{i,j}=0$& for $n\geq 4$ & for $n\geq 0$ & & & \\
  
 &\begin{footnotesize}{\it (See \cite{ARNOLD2})}\end{footnotesize} &   & &  & &\begin{footnotesize}{\it (See \cite{CEF})}\end{footnotesize} \\
%&  & & & & &\\
\hline
%&  & & & & &\\
& Subalgebra of $\mathcal{R}_n$ generated by  & & & & &\\
$\mathcal{M}_{0,n+1}$ &$1$ and $\theta_{i,j}:=w_{i,j}-w_{1,2}$ with $\{i,j\}\neq\{1,2\}$ & $V(1)\oplus V(2)$ & $\binom {X_1}{2}+X_2 -1$ & $2p$ & $4p$ & $2p$\\
 &\begin{footnotesize}{\it (See \cite[Cor. 3.1]{GAIFFI} and references therein)}\end{footnotesize} & for $n\geq 4$ & for $n\geq 2$ & & & \begin{footnotesize}{\it (See \cite{JIM3})}\end{footnotesize}\\
%&  & & & & &\\

\hline

%&  & & & & &\\
 & Supercommutative algebra generated by & & & & &  \\
 & $w_{ijkl}$   $1\leq i,j,k,l\leq n$, antisymmetric in & &$\binom{X_1}{3}+X_3-X_2X_1$ & & &  \\
$\overline{\mathcal{M}}_{0,n}(\mathbb{R})$ & $i,j,k,l$ with relations $w_{ijkl}w_{ijkm}=0$ and & $V(1,1,1)$ & $-\binom{X_1}{2}+X_2+X_1-1$ & $3p$ & $3p +1$ & $3p$  \\
 &$w_{ijkl}+w_{jklm}+w_{klmi}+w_{lmij}+w_{mijk}=0$& for $n\geq 4$ & for $n\geq 3$ & & &  \\
 &\begin{footnotesize}{\it (\cite[Thm. 2.9]{EHKR})}\end{footnotesize}& & & \begin{footnotesize}{\it (\ref{lemmaweight})}\end{footnotesize}& \begin{footnotesize}{\it (\ref{lemmadeg})}\end{footnotesize}&   \begin{footnotesize}{\it (\ref{lemmastab})}\end{footnotesize}\\
%&  & & & & &\\
\hline
%&  & & & & &\\
 &Exterior algebra  generated by $w_{i,j}$, & & & & &\\
$PvB_n$ &  $1\leq i\neq j\leq n$,  with relations $w_{i,j}w_{j,i}=0$,& $V(\cdot)\oplus V(1)^2\oplus$  & $2\binom{X_1}{2}$ & $2p$ & $2p$ & $2p$\\ 
& $w_{i,j}w_{i,k}=w_{i,j}w_{j,k}-w_{i,k}w_{k,j}$ and & $V(1,1)\oplus V(2)$& for $n\geq 0$ & & &  \\
& $w_{i,k}w_{j,k}=w_{i,j}w_{j,k}-w_{j,i}w_{i,k}$ &for $n\geq 4$  & & & &  \\
 &\begin{footnotesize}{\it (See \cite[Section 5]{LEE1}) and \cite{BARTH}}\end{footnotesize}    & &  & \begin{footnotesize}{\it (See \cite{LEE2})}\end{footnotesize} & & \\
%&  & & & & &\\
\hline
%&  & & & & &\\
 &Exterior algebra  generated by $w_{i,j}$   & & & & &\\
$Pf	B_n$& $1\leq i\neq j\leq n$, with relations & $V(1) \oplus V(1,1)$ & $\binom{X_1}{2}-X_2$ & $2p$ & $2p$ & $2p$\\ 
 &$w_{i,j}=-w_{j,i}$, $w_{i,j}w_{i,k}=w_{i,j}w_{j,k}$ and& for $n\geq 3$ & for $n\geq 0$ & & &  \\
 &$w_{i,k}w_{j,k}=w_{i,j}w_{j,k}$ for $i<j<k$&  &  & & &  \\
 &\begin{footnotesize}{\it (See \cite[Section 5]{LEE1}) and \cite{BARTH}}\end{footnotesize}   &  &  & \begin{footnotesize}{\it (See \cite{LEE2})}\end{footnotesize}& & \\
%&  & & & & &\\
\hline
%&  & & & & &\\
 &Exterior algebra  generated by $w_{i,j}$ & & & & &\\
$P\Sigma_n$ & $1\leq i\neq j\leq n$, with relations $w_{i,j}w_{j,i}=0$,&  $V(\cdot)\oplus V(1)^2\oplus$  & $2\binom{X_1}{2}$ & $2p$ & $2p$ & $2p$\\ 
&and $w_{k,j}w_{j,i}-w_{k,j}w_{k,i}+w_{i,j}w_{k,i}=0$ &$V(1,1)\oplus V(2)$ &for $n\geq 0$ & \begin{footnotesize}(See \cite{WILSON2})\end{footnotesize}& &\\
& \begin{footnotesize}{\it (See \cite{JMM}})\end{footnotesize}  & for $n\geq 4$& & & &\\
\hline
%& &$\omega_{k,j}\omega_{j,i}-\omega_{k,j}\omega_{k,i}+\omega_{i,j}\omega_{k,i}=0$ & for $n\geq 3$ & for $n\geq 4$x & & & \\
%& &\begin{footnotesize}{\it (\cite[Thm. 6.7]{JMM}) }\end{footnotesize} &   & &  & &  \begin{footnotesize}{\it (See \cite{WILSON2})}\end{footnotesize}\\\hline

%& &Exterior algebra  generated by  & & & & &\\
%$\mathcal{M}_{BC}(n)$&$B_n$ & $\omega_H$ with relations&  $M_{BC}((1),(\cdot))\oplus$  & COMPUTE & $2p$ & $2p$ & $2p$\\ 
%& & & $ M_{BC}((2),(\cdot))\oplus M_{BC}((\cdot),(2))$  & for $n\geq 4$ & & &  \\
%& &\begin{footnotesize}{\it (See \cite{BRIES}) and \cite{ORSOL}}\end{footnotesize} & for $n\geq 4$  & &  & & \begin{footnotesize}{\it (See \cite{WILSON2})}\end{footnotesize} \\\hline

%& &Exterior algebra  generated by  & & & & &\\
%$\mathcal{M}_{D}(n)$&$D_n$ & $\omega_H$ with relations & $2M_{D}((2),(\cdot))$ & COMPUTE & $2p$ & $2p$ & $2p$\\ 
%& & & for $n\geq 3$ & for $n\geq 4$ & & & \\
%& &\begin{footnotesize}{\it (See \cite{BRIES}) and \cite{ORSOL}}\end{footnotesize} &   & &  & & \begin{footnotesize}{\it (See \cite{WILSON2})}\end{footnotesize}\\\hline

          % \bottomrule

        \end{tabular}\hspace*{-1cm}

\end{table}
\end{center}
%\end{footnotesize}
\end{landscape}
\subsection*{\bf Configuration space of $n$ points in $\mathbb{C}$  and the pure braid group.}  This example was the first studied with this approach in \cite[Example 5.1.A]{CEF}. We have  the co-$FI$-space $\mathcal{F}(\mathbb{C},\bullet)$ given by $\mathbf{n}\mapsto \mathcal{F}(\mathbb{C},n)$. Since these configuration spaces are Eilenberg-MacLane spaces, we could alternatively consider their fundamental groups $P_n$, the {\it pure braid groups}. %Alternatively, , and the correspondign $FI$-module $H^i(P_\bullet;\mathbb{Q})$.% given by $\mathbf{n}\mapsto P_n$, . 
The cohomology ring in this case is the so called Arnol'd algebra which is generated by cohomology classes of degree one. We recall its description in Table \ref{EXA} and recover the information about the weight and degree of generation from our discussion in Section \ref{FIREP}. Uniform representation stability was originally proved in \cite{CF}.

\subsection*{\bf Moduli space of $n$-pointed curves of genus zero.}
The {\it moduli space of $(n+1)$-pointed curves of genus zero} $\mathcal{M}_{0,n+1}$ has a natural action of $S_{n+1}$ by permuting the marked points. If we focus on the action of the subgroup $S_n$ generated by the transpositions $(2\ 3), (3\ 4),\ldots, (n\ n+1)$ in $S_{n+1}$, we can identify its cohomology ring with a subalgebra of $H^*(\mathcal{F}(\mathbb{C},n),\mathbb{Q})$ as we recall in Table \ref{EXA}. Hence the graded $FI$-algebra $H^*(\mathcal{M}_{0,\bullet+1},\mathbb{Q})$ is generated by degree one classes and our arguments in Section \ref{FIREP} readily apply. As explained in \cite{JIM3}, we can use conclusions on these ``shifted'' $FI$-modules to obtain information about $H^*(\mathcal{M}_{0,\bullet},\mathbb{Q})$.
\begin{remark}

The theory of $FI$-modules can also be used to study and obtain representation stability  of the cohomology of configuration spaces of $n$ points on connected manifolds  and moduli spaces of genus $g\geq 2$ with $n$ marked points (see \cite{CHURCH}, \cite{CEF}, \cite{JIM} and \cite{JIM2}). In these cases, little information is known about the corresponding cohomology rings and the arguments needed are more involved than the ones discussed in this note.  \end{remark}

\subsection*{\bf Pure virtual and flat braid groups.} 
The {\it pure virtual braid groups} $PvB_n$ and the {\it pure flat braid groups} $PfB_n$ are generalizations of the braid group $P_n$ that allow {\it virtual}  crossings of strands. Those virtual crossings were introduced in \cite{KAUFF}. The original motivation was to study knots in thickened
surfaces of higher genus  and the extension of knot theory to the domain of
Gauss codes and Gauss diagrams.  Explicit definitions and  presentations of these groups can be found in \cite[Section 2.1]{LEE2}.
As for the pure braid group $P_n$, there is a well defined action of $S_n$ on $H^*(PvB_n;\mathbb{Q})$ and $H^*(PfB_n;\mathbb{Q})$. Moreover, we have well-defined forgetful maps compatible with the $S_n$-action. Hence  we can consider the $FI$-modules $H^p(PvB_\bullet;\mathbb{Q})$ and $H^p(PfB_\bullet;\mathbb{Q})$ in the setting from Section \ref{FIREP}.

From the description in Table \ref{EXA}, we see that the corresponding cohomology rings have a graded $FI$-algebra structure and are generated by the first cohomology group. %We can use the information about the first cohomology  and apply Proposition\ref{GENBYH1} to obtain bounds for finite generation and weight for the cohomology in grading $p$. 
Moreover, from the explicit decompositions of $H^p(PvB_n,\mathbb{Q})$  and $H^p(PfB_n,\mathbb{Q})$ as a direct sum of induced representations obtained in \cite[Theorem 3 and 8]{LEE2}, it follows that $H^p(PvB_\bullet,\mathbb{Q})$ and $H^p(PfB_\bullet,\mathbb{Q})$ have the extra-structure of an $FI\#$-module (see \cite[Def. 4.1.1 and Thm. 4.1.5]{CEF}). This in particular implies that the corresponding stability degree is bounded above by the weight (\cite[Cor. 4.1.8]{CEF}). Theorem \ref{thm:FI} implies uniform representation stability, which was previously obtained in \cite{LEE2} using the approach in \cite{CF}.\bigskip

In \cite{WILSON1}, Wilson  extended the work of Church, Ellenberg, Farb, and Nagpal 
on sequences of representations of  $S_n$ to representations of the signed permutation groups $B_n$ and the even-signed permutation groups $D_n$. She introduced the notion of a finitely generated $FI_\mathcal{W}$-module, where $\mathcal{W}_n$ is the Weyl group in type $A_{n-1}$, $B_n/C_n$, or $D_n$. Arguments like the ones discussed in Section \ref{FIREP} also apply in this setting (\cite[Prop.  5.11]{WILSON1}). Uniform representation stability and polynomiality of characters are also consequences of this approach (\cite[Thms 4.26 \& 4.27]{WILSON1} and \cite[Thm 4.16]{WILSON2}).

\subsection*{\bf Group of pure string motions.}
The {\it group $\Sigma_n$ of  string motions} is another  generalization of the braid group. It is defined as the group of motions of $n$ smoothly embedded, oriented, unlinked, unknotted circles in $\mathbb{R}^3$ and can be  identified with the {\it symmetric
automorphism group} of the free group $F_n$. We refer the interested reader to \cite[Section 5.1]{WILSON2} and references therein.
The subgroup $P\Sigma_n$ of {\it pure string motions} or {\it pure symmetric automorphisms} is the analogue of the pure braid group and has a natural action of the signed permutation group $B_n$. The presentation of the cohomology ring that we recall in Table \ref{EXA} can be used to show that $H^*(P\Sigma_\bullet,\mathbb{Q})$ has the structure of what she calls a graded $FI_{BC}\#$-algebra (see \cite[Theorem 5.3]{WILSON2}) which turns out to be finitely generated by $H^1(P\Sigma_\bullet,\mathbb{Q})$. By restricting to the $S_n$-action, one recovers finite generation for $H^p(P\Sigma_\bullet,\mathbb{Q})$  as $FI$-modules.

\begin{remark} The cohomology groups of $P_n$, $PvB_n$, $PfB_n$ and $P\Sigma_n$ have the extra-structure of an $FI\#$-module. By \cite[4.1.7(g)]{CEF}, the corresponding characters are given by a unique polynomial character for all $n\geq 0$. In contrast, for  the spaces $M_n:=\overline{\mathcal{M}}_{0,n}(\mathbb{R})$ and $\mathcal{M}_{0,n+1}$, the formulas for the character of their first cohomology do not extend for the cases $n=0$ and $n=1$, respectively.  However, as the referee pointed out, it is interesting to notice that the formulas for $\chi_{H^1(M_n)}$ and $\chi_{H^1(\mathcal{M}_{0,n+1})}$ are accurate once the spaces considered are non-empty ($n\geq 3$ and $n\geq 2$, respectively). For instance, the formula for $\chi_{H^1(M_n)}$ is identically $0$ as a class function on $S_n$ for $n = 3$ which corresponds to the fact that $H^1(M_3;\mathbb{Q})=0$. And the formula for $\chi_{H^1(\mathcal{M}_{0,n+1})}$  works for all $n\geq 2$ (see for example \cite[Formula (2)]{JIM3}).

\end{remark}
%\makesavenoteenv{tabular}
%\makesavenoteenv{table}

\subsection*{\bf Hyperplane complements.} Another generalization of  configuration spaces on the plane is given by hyperplane complements for other Weyl groups. Consider the canonical action of $\mathcal{W}_n$ on $\mathbb{C}^n$  by (signed) permutation matrices and let $\mathcal{A}(n)$ be the set of hyperplanes fixed by the (complexified) reflections of $\mathcal{W}_n$.  The sequence of hyperplane complements  $\mathcal{M}_\mathcal{W}(n):=\mathbb{C}^n-\bigcup_{H\in\mathcal{A}(n)}H$ can be encoded as a co-$FI_\mathcal{W}$-space $\mathcal{M}_\mathcal{W}(\bullet)$. In particular $\mathcal{M}_{A}(\bullet)=\mathcal{F}(\mathbb{C},\bullet)$. The corresponding fundamental groups are referred as ``generalized pure braid groups''. The work  in \cite{BRIESKORN} and \cite{ORSOL} show that the cohomology ring for $\mathcal{M}_\mathcal{W}(n)$ is also generated by cohomology classes of degree one and arguments as before apply. The details are worked out in \cite[Thm 5.8]{WILSON2}. %By restricting to the $S_n$-action, we obtain $FI$-modules 
\subsection*{\bf Complex points of the Deligne-Mumford compatification of $\mathcal{M}_{0,n}$ } 
Another related example is  the complex locus  of the variety $\overline{\mathcal{M}}_{0,n}$, that we denote by $\overline{\mathcal{M}}_{0,n}(\mathbb{C})$. From our discussion in Section \ref{REALMODULI}, it is clear that $H^*(\overline{\mathcal{M}}_{0,\bullet}(\mathbb{C});\mathbb{Q})$ has the struture of a graded $FI$-algebra. Moreover, from the results in \cite{KEEL} it follows that it is generated by the $FI$-module  $H^2(\overline{\mathcal{M}}_{0,\bullet}(\mathbb{C});\mathbb{Q})$. However, the computations in \cite{KEEL} also imply that the dimension of $H^i(\overline{\mathcal{M}}_{0,n}(\mathbb{C});\mathbb{Q})$ is  exponential in $n$. Therefore, from ~\autoref{thm:CHARCACTUS} it follows  that the $FI$-modules $H^i(\overline{\mathcal{M}}_{0,\bullet}(\mathbb{C});\mathbb{Q})$ are not finitely generated.

%A set of generators for $vB_n$ is given by the same generators for $B_n$, the set $\{\sigma_i: i=1,2,\ldots,n-1\}$, where we think of the generator  $\sigma_i$ as the braid with $n$
%strands with the strand in position $i$ crossing over the adjacent strand to the
%`right', plus a set generators $\{s_i: i=1,2,\ldots n-1\}$ that correspond to the  virtual crossings. For the specific relations among those generators, see for example \cite[Section 2.1]{LEE2}. The group $fB_n$ has the same generating set and relations with the additional relation $\sigma_i^2=0$.

%As in the case of the braid groups, both $vB_n$ and $fB_n$ surject onto the symmetric group $S_n$ and the corresponding kernels are the {\it pure virtual braid group} $PvB_n$ and the {\it pure flat braid group} $PfB_n$. . Moreover the corresponding short exact sequences
%$$1\rightarrow PvB_n\rightarrow vB_n\rightarrow S_n\rightarrow 1$$
%$$1\rightarrow PfB_n\rightarrow fB_n\rightarrow S_n\rightarrow 1$$
%split and . 

%    Text of article.

%    Bibliographies can be prepared with BibTeX using amsplain,
%    amsalpha, or (for "historical" overviews) natbib style.
%\bibliographystyle{amsplain}
%    Insert the bibliography data here.

%\begin{small} 

\bibliographystyle{amsalpha}
\bibliography{referFI}

\providecommand{\bysame}{\leavevmode\hbox to3em{\hrulefill}\thinspace}
\providecommand{\MR}{\relax\ifhmode\unskip\space\fi MR }
% \MRhref is called by the amsart/book/proc definition of \MR.
\providecommand{\MRhref}[2]{%
  \href{http://www.ams.org/mathscinet-getitem?mr=#1}{#2}
}
\providecommand{\href}[2]{#2}
\begin{thebibliography}{EHKR10}

\bibitem[Arn69]{ARNOLD2}
V.~I. Arnol'd, \emph{The cohomology ring of the colored braid group},
  Mathematical Notes 5 (1969), no.~2, 138--140.

\bibitem[BEER06]{BARTH}
L.~Bartholdi, B.~Enriquez, P.~Etingof, and E.~Rains, \emph{Groups and {L}ie
  algebras corresponding to the {Y}ang-{B}axter equations}, J. Algebra
  \textbf{305} (2006), no.~2, 742--764.

\bibitem[Bri73]{BRIESKORN}
E.~Brieskorn, \emph{Sur les groupes de tresses [d'apr\`es {V}. {I}. {A}rnol'
  d]}, S\'eminaire {B}ourbaki, 24\`eme ann\'ee (1971/1972), {E}xp. {N}o. 401,
  Springer, Berlin, 1973, pp.~21--44. Lecture Notes in Math., Vol. 317.

\bibitem[CEF15]{CEF}
T.~Church, J.~Ellenberg, and B.~Farb, \emph{$\text{FI}$-modules: a new approach
  for $\text{S}_n$-representations}, Duke Math. J. \textbf{164} (2015), no.~9,
  1833--1910.

\bibitem[CEFN14]{CEFN}
T.~Church, J.~Ellenberg, B.~Farb, and R.~Nagpal, \emph{$\text{FI}$-modules over
  noetherian rings}, Geometry \& Topology \textbf{18-5} (2014), 2951--2984.

\bibitem[CF13]{CF}
T.~Church and B.~Farb, \emph{Representation theory and homological stability},
  Advances in Mathematics (2013), 250--314, DOI 10.1016/j.aim.2013.06.016.

\bibitem[Chu12]{CHURCH}
T.~Church, \emph{Homological stability for configuration spaces of manifolds},
  Inventiones Mathematicae 188 (2012), no.~2, 465--504.

\bibitem[Dev99]{DEV}
Satyan~L. Devadoss, \emph{Tessellations of moduli spaces and the mosaic
  operad}, Homotopy invariant algebraic structures ({B}altimore, {MD}, 1998),
  Contemp. Math., vol. 239, Amer. Math. Soc., Providence, RI, 1999,
  pp.~91--114.

\bibitem[DJS03]{DJS}
M.~Davis, T.~Januszkiewicz, and R.~Scott, \emph{Fundamental groups of
  blow-ups}, Adv. Math. \textbf{177} (2003), no.~1, 115--179.

\bibitem[DM69]{DM}
P.~Deligne and D.~Mumford, \emph{The irreducibility of the space of curves of
  given genus}, Inst. Hautes \'Etudes Sci. Publ. Math. (1969), no.~36, 75--109.

\bibitem[EHKR10]{EHKR}
P.~Etingof, A.~Henriques, J.~Kamnitzer, and E.~M. Rains, \emph{The cohomology
  ring of the real locus of the moduli space of stable curves of genus 0 with
  marked points}, Ann. of Math. (2) \textbf{171} (2010), no.~2, 731--777.

\bibitem[Gai96]{GAIFFI}
G.~Gaiffi, \emph{The actions of {$S_{n+1}$} and {$S_n$} on the cohomology ring
  of a {C}oxeter arrangement of type {$A_{n-1}$}}, Manuscripta Math.
  \textbf{91} (1996), no.~1, 83--94.

\bibitem[JMM06]{JMM}
C.~Jensen, J.~McCammond, and J.~Meier, \emph{The integral cohomology of the
  group of loops}, Geom. Topol. \textbf{10} (2006), 759--784.

\bibitem[JR]{JIM3}
R.~Jim\'{e}nez~Rolland, \emph{The cohomology of $\mathcal{M}_{0,n}$ as an
  $\text{FI}$-module}, Preprint (2014). To appear in {\it Springer INdAM
  Conference Series}.

\bibitem[JR11]{JIM}
\bysame, \emph{Representation stability for the cohomology of the moduli space
  $\mathcal{M}_g^n$}, Algebraic \& Geometric Topology \textbf{11} (2011),
  no.~5, 3011--3041.

\bibitem[JR13]{JIM2}
\bysame, \emph{On the cohomology of pure mapping class groups as
  $\text{FI}$-modules}, Journal of Homotopy and Related Structures (2013), DOI
  10.1007/s40062-013-0066-z.

\bibitem[Kau99]{KAUFF}
L.~H. Kauffman, \emph{Virtual knot theory}, European J. Combin. \textbf{20}
  (1999), no.~7, 663--690.

\bibitem[Kee92]{KEEL}
S.~Keel, \emph{Intersection theory of moduli space of stable {$n$}-pointed
  curves of genus zero}, Trans. Amer. Math. Soc. \textbf{330} (1992), no.~2,
  545--574.

\bibitem[KM]{KMILLER}
A.~Kupers and J.~Miller, \emph{Representation stability for homotopy groups of
  configuration spaces}, Preprint (2014)
  \href{http://arxiv.org/abs/1410.2328}{arXiv:1410.2328}.

\bibitem[KV07]{QUANTUM}
J.~Kock and I.~Vainsencher, \emph{An invitation to quantum cohomology}, Progr.
  Math., vol. 249, Birkh\"auser, Basel, 2007.

\bibitem[Lee]{LEE2}
P.~Lee, \emph{On the action of the symmetric group on the cohomology of groups
  related to (virtual) braids}, Preprint (2013)
  \href{http://arxiv.org/pdf/1304.4645.pdf}{arXiv:1304.4645}.

\bibitem[Lee13]{LEE1}
\bysame, \emph{The pure virtual braid group is quadratic}, Selecta Math. (N.S.)
  \textbf{19} (2013), no.~2, 461--508.

\bibitem[OS80]{ORSOL}
P.~Orlik and L.~Solomon, \emph{Combinatorics and topology of complements of
  hyperplanes}, Invent. Math. \textbf{56} (1980), no.~2, 167--189.

\bibitem[Rai09]{RAINS}
E.~M. Rains, \emph{The action of {$S_n$} on the cohomology of {$\overline
  M_{0,n}(\Bbb R)$}}, Selecta Math. (N.S.) \textbf{15} (2009), no.~1, 171--188.

\bibitem[Wil]{WILSON2}
J.~C.~H. Wilson, \emph{{$\text{FI}_\mathcal{W}$}-modules and constraints on
  classical $\text{W}$eyl group characters}, Preprint (2014)
  \href{http://arxiv.org/abs/1503.08510}{arXiv:1503.08510}.

\bibitem[Wil14]{WILSON1}
\bysame, \emph{{$\text{FI}_\mathcal{W}$}-modules and stability criteria for
  representations of classical $\text{W}$eyl groups}, J. Algebra \textbf{420}
  (2014), 269--332.

\end{thebibliography}
%\bigskip
%\end{small}

% \begin{thebibliography}{1}

 %\bibitem{ARNOLD2} V.I. Arnol'd, {\em The cohomology ring of the colored braid group},
  %Mathematical Notes 5, No.2, (1969), p.p.138--140.

 %\bibitem{CF} T. Church and B. Farb, {\em Representation Stability and homological stability},
 %arXiv :1008.1368.
 
 %\bibitem{CEF} T.Church, J. Ellenberg and B. Farb, {\em FI-modules and stability for representations of symmetric groups,}  arXiv:1204.4533v4.

 %\bibitem{EHKR} P. Etingof, A. Henriques,
%J. Kamnitzer, and E. M. Rains, {\em The cohomology ring of the real locus of the moduli space of stable curves
%of genus 0 with marked points,} Ann of Math, Vol. 171, No. 2, march 2010.
 
 %\bibitem {DM} P. Deligne and D. Mumford, {\em The irreducibility of the space of curves of given genus,}
%Inst. Hautes Études Sci. Publ. Math. (1969), 75-109.

 %\bibitem {DEV} S. L. Devadoss, {\em Tessellations of moduli spaces and the mosaic operad, in Homotopy
%Invariant Algebraic Structures} (Baltimore, MD, 1998), Contemp. Math. 239, Amer. Math.
%Soc., Providence, RI, 1999, pp. 91–114.

%\bibitem {ROL}R.Jimenez Rolland, {\em Representation stability for the cohomology of the moduli space $M_g^n$}, arXiv:1106.0947v2. 

%\bibitem {LEE} P.Lee, {\em On the Action of the Symmetric Group on the Cohomology of Groups Related to (Virtual) Braids,} arxiv:1304.4645. 

%\bibitem {DJS}M. Davis, T. Januszkiewicz, and R. Scott, {\em Fundamental groups of blow-ups}, Adv.
%Math. 177 (2003), 115–179.

 %\end{thebibliography}
\end{document}